\newcommand\widecheck[1]{%
\savestack{\tmpbox}{\stretchto{%
  \scaleto{%
    \scalerel*[\widthof{\ensuremath{#1}}]{\kern-.6pt\bigwedge\kern-.6pt}%
    {\rule[-\textheight/2]{1ex}{\textheight}}
  }{\textheight}%
}{0.5ex}}%
\stackon[1pt]{#1}{\scalebox{-1}{\tmpbox}}%
}
\newcommand\@avprod[2]{%
  {\sbox0{$\m@th#1\prod$}%
   \vphantom{\usebox0}%
   \ooalign{%
     \hidewidth
     \smash{\vrule height\dimexpr\ht0+1pt\relax depth\dimexpr\dp0+1pt\relax}%
     \hidewidth\cr
     $\m@th#1\prod$\cr
   }%
  }%
}
\newcommand{\avprod}{\mathop{\mathpalette\@avprod\relax}\displaylimits}
\numberwithin{equation}{section}
\newtheorem{theorem}{Theorem}[section]
\newtheorem{lemma}[theorem]{Lemma}
\newtheorem{proposition}[theorem]{Proposition}
\newtheorem{definition}[theorem]{Definition}
\newtheorem{corollary}[theorem]{Corollary}
\newcommand{\e}{\epsilon}
\newcommand{\mP}{\mathcal{P}}
\newcommand{\mPb}{\mathcal{P}_{R^{-\beta}} }
\newcommand{\dpm}{D_{p,\mathrm{mul}}}
\newcommand{\R}{\mathbb{R}}
\newcommand{\supp}{\textup{supp}}
\begin{document}

\title[]{On the small cap decoupling for the moment curve in $\mathbb{R}^3$}

\author{Dominique Maldague} \address{Dominique Maldague\\  Centre for Mathematical Sciences, Wilberforce Road, Cambridge CB3 0WA, UK}\email{dm672@cam.ac.uk}

\author{Changkeun Oh}\address{ Changkeun Oh\\ Department of Mathematical Sciences and RIM, Seoul National University, Republic of Korea} \email{changkeun@snu.ac.kr}


\maketitle

\begin{abstract} This paper proves sharp small cap decoupling estimates for the moment curve $\mathcal{M}^n=
\{(t,t^2,\ldots,t^n):0\le t\le 1\}$ in the remaining small cap parameter ranges for $\R^2$ and $\R^3$. 
\end{abstract}



\section{Introduction}

Small cap decoupling is an $L^p$ estimate of exponential sums with frequencies lying in curved sets and satisfying a spacing condition (one frequency per small cap). The general setup was introduced in \cite{MR4153908}, where they proved sharp estimates for frequencies on curves in $\R^2$ and certain other special cases. In \cite{MR4153908}, the authors recorded a conjecture (Conjecture 2.5) about sharp small cap decoupling estimates for frequencies lying on the moment curve $\mathcal{M}^n=\{(t,t^2,\ldots,t^n):0\le t\le 1\}$ and they made partial progress on the $n=3$ case. In \cite{guth2022small}, Guth and Maldague proved a more general small cap decoupling result which fully resolved the $n=3$ case of Conjecture 2.5 from \cite{MR4153908}. The main result of this paper is sharp small cap decoupling estimates for $\mathcal{M}^3$ in the full range of parameters not addressed by Conjecture 2.5 in \cite{MR4153908} or in \cite{guth2022small}. In particular, we have the following. 

\begin{theorem}\label{23.12.31.thm11} For $\sigma \ge 0$ and $p\ge 2$, we have
\begin{equation*}
    \Big( \int_{[0,1]^2 \times [0,\frac{1}{N^{\sigma}}]} \Big| \sum_{k=1}^{N} a_k e\big(x \cdot (k,k^2,k^3)\big) \Big|^p\,dx \Big)^{\frac1p} \leq C_{\epsilon}N^{\epsilon} D_p(\sigma,N)(\sum_{k=1}^{N}|a_{k}|^p)^{\frac1p}
\end{equation*}
for any $a_k \in \mathbb{C}$, in which 
\[ D_p(\sigma,N)=  \begin{cases}
N^{\frac12-\frac{\sigma+1}{p} }+N^{1-\frac{7}{p}}     &\qquad\text{if}\qquad 0\leq \sigma < 5/2 \\
    N^{\frac12-\frac{\sigma+1}{p} }+N^{\frac{\sigma}{3}-\frac{7\sigma}{3p} }+N^{1-\frac{7}{p}} &\qquad\text{if}\qquad 5/2\leq \sigma < 3 \\
    N^{\frac12-\frac{\sigma}{p} }+N^{1-\frac{3+\sigma}{p}}&\qquad\text{if}\qquad 3\leq \sigma  
\end{cases}. \]

\end{theorem}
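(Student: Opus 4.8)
The plan is to establish matching upper and lower bounds, with the bulk of the work being the upper bound. For the lower bound, I would test the inequality against specific choices of $a_k$: taking all $a_k=1$ and evaluating the integral near the origin $x\approx 0$ gives a contribution of size $N$ on a box of measure $\sim N^{-\sigma}$ (when $\sigma<3$; when $\sigma\ge 3$ the slab is thin enough that a larger portion of the box sees constructive interference, matching $N^{\frac12-\frac{\sigma}{p}}$-type terms), producing the $N^{1-\frac{7}{p}}$ and $N^{1-\frac{3+\sigma}{p}}$ terms; taking $a_k$ supported on a single value or exploiting flat/degenerate pieces of the moment curve at the relevant scale gives the square-root terms $N^{\frac12-\frac{\sigma+1}{p}}$, $N^{\frac12-\frac{\sigma}{p}}$, and the intermediate term $N^{\frac{\sigma}{3}-\frac{7\sigma}{3p}}$ should come from considering exponential sums over arithmetic-progression-like subsets adapted to the scale $N^{-\sigma}$ together with the known $\ell^p$-decoupling numerology for $\mathcal{M}^2$ (the projection of $\mathcal{M}^3$). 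The transition values $\sigma=5/2$ and $\sigma=3$ are exactly where the competing extremizers swap dominance, so checking that the stated $D_p(\sigma,N)$ is continuous and correctly picks out the max at each $(\sigma,p)$ is a bookkeeping step I would do carefully.

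For the upper bound, the natural approach is to interpolate between a small-cap decoupling estimate and a trivial estimate, but more robustly to run a direct argument organized by the geometry of the moment curve at three distinct scales. First I would decompose $\sum_{k=1}^N a_k e(x\cdot(k,k^2,k^3))$ into blocks associated to arcs of $\mathcal{M}^3$ of length $N^{-1/3}$ (where the curve is essentially flat to second order in the relevant window) and of length $N^{-1/2}$ or other critical lengths dictated by the thickness $N^{-\sigma}$ of the slab in the third coordinate. On the slab $[0,1]^2\times[0,N^{-\sigma}]$, the third frequency variable $k^3$ is only ``resolved'' modulo $N^{\sigma}$, so the problem effectively reduces, for $\sigma$ large, to a decoupling problem for the parabola $\mathcal{M}^2=\{(t,t^2)\}$ — this is why for $\sigma\ge 3$ the answer has the two-dimensional shape $N^{\frac12-\frac{\sigma}{p}}+N^{1-\frac{3+\sigma}{p}}$. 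For smaller $\sigma$ one must genuinely use the three-dimensional curvature, invoking the $\ell^p$-decoupling theorem for $\mathcal{M}^3$ (Bourgain--Demeter--Guth) at intermediate scales together with a small-cap refinement in the spirit of Guth--Maldague, and then carefully summing the contributions of the various scales via flat/broad--narrow or high--low decompositions.

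The key steps, in order, would be: (i) reduce to $N=$ power of the relevant base and normalize; (ii) partition the frequency set $\{1,\dots,N\}$ into the correct hierarchy of arcs, matched to the three scales $1$, $N^{-1/3}$ (or $N^{-1/2}$), and the slab thickness $N^{-\sigma}$; (iii) on each slab apply the appropriate known decoupling — full $\ell^p$-decoupling for $\mathcal{M}^3$ where the curvature is visible, and decoupling for $\mathcal{M}^2$ where the third coordinate is unresolved — picking up the $N^{\epsilon}$ loss; (iv) handle the remaining ``small-cap'' spacing gain by a separate combinatorial/orthogonality argument (an $L^2$ or high-frequency estimate exploiting that there is at most one frequency per small cap), which is where the terms like $N^{\frac12-\frac{\sigma+1}{p}}$ are produced; and (v) optimize the interpolation/summation over scales to recover exactly the three-case formula for $D_p(\sigma,N)$, checking the breakpoints $\sigma=5/2,3$. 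I expect the main obstacle to be step (iv) combined with (v): precisely identifying, for each range of $\sigma$, which intermediate scale is extremal and proving the small-cap gain is sharp there — in particular producing the intermediate exponent $N^{\frac{\sigma}{3}-\frac{7\sigma}{3p}}$ in the window $5/2\le\sigma<3$ requires a delicate balancing of the parabola-decoupling and moment-curve-decoupling contributions that does not follow from either classical estimate alone, and controlling the interaction between the curvature at scale $N^{-1/3}$ and the slab thickness is the technical heart of the argument.
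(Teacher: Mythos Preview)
Your proposal is a reasonable high-level outline and correctly identifies several ingredients (broad--narrow, high--low, reduction to parabola decoupling for large $\sigma$), but it is too vague to constitute a proof, and there is one concrete mischaracterization.

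First, the source of the intermediate term $N^{\frac{\sigma}{3}-\frac{7\sigma}{3p}}$ is not a ``delicate balancing'' of parabola and moment-curve decoupling as you suggest. In the paper's argument the upper bound splits cleanly into a broad (trilinear) part and a narrow part. The narrow part is handled by affine rescaling followed by an induction on scales; after each rescaling a residual factor is controlled by parabola small-cap decoupling, and iterating this produces exactly the factor $R^{(\beta-\frac13)(1-\frac4p)}$, which under the dictionary $\beta=\frac{1}{3-\sigma}$ becomes the intermediate term. The broad (trilinear) part, by contrast, gives only the other two exponents. So the intermediate term arises entirely from the narrow iteration, not from any interaction between the two. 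Correspondingly, on the lower-bound side the matching example is not an arithmetic progression but simply $a_k=1$ for $1\le k\le N^{\sigma/3}$ and $a_k=0$ otherwise; on this short initial segment the cubic phase is negligible over the slab, and the estimate reduces to the constructive-interference example for the parabola.

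Second, and more seriously, your step (iv) --- ``a separate combinatorial/orthogonality argument (an $L^2$ or high-frequency estimate)'' --- is where the actual work lies, and your proposal gives no mechanism. The paper's argument here is not a single orthogonality step but an entire machine: one introduces averaged square functions $g_k$ at a full hierarchy of scales $R^{\epsilon},R^{2\epsilon},\dots,R^{\beta}$, partitions the domain into level sets $\Lambda_k$ according to where each $g_k$ dominates, proves a ``high lemma'' showing that on $\Lambda_k$ the square function $g_k$ is controlled by its high-frequency part (whose summands have disjoint Fourier supports, giving genuine $L^2$ orthogonality), and then on each $\Lambda_k$ combines the trilinear restriction estimate for $\mathcal{M}^3$ with a \emph{bilinear Kakeya} inequality in the first two variables and the $L^4$ reverse square function estimate for the parabola. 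The bilinear Kakeya step and the two-dimensional wave-packet structure (tubes of dimensions $R^{1+\epsilon}\times R^{2+2\epsilon}$ in the $(x_1,x_2)$ plane) are specific to the regime $\beta>1$ (equivalently $\sigma>2$) and are the new technical content; nothing in your outline hints at them. Without this machinery, your step (iv) is a gap rather than a step.
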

Theorem \ref{23.12.31.thm11} is sharp, up the $C_\e N^\e$ factor. We demonstrate this in \textsection\ref{sharpness}. Note that for $5/2 \leq \sigma \leq 3$, we have
\begin{equation*}
    N^{\frac12-\frac{\sigma}{p} }+N^{\frac{\sigma}{3}+(1-\frac{7\sigma}{3})\frac1p }+N^{1-\frac{6}{p}} \sim
    \begin{cases}
    N^{\frac12-\frac{\sigma}{p} } \; \mathrm{ for } \;p< \frac{8\sigma-6}{2\sigma-3}  \\  N^{\frac{\sigma}{3}+(1-\frac{7\sigma}{3})\frac1p } \; \mathrm{\; for\; } \frac{8\sigma-6}{2\sigma-3} <p <7 \\ N^{1-\frac{6}{p}} \; \mathrm{\; for\; } 7<p
    \end{cases}
\end{equation*}
Conjecture 2.5 of \cite{MR4153908} is a special case of Theorem \ref{23.12.31.thm11} for the range $0\le \sigma\le 2$. Bombieri-Iwaniec \cite{MR881101} and Bourgain \cite{MR3736494} resolved a special case of Theorem \ref{23.12.31.thm11} for $\sigma=2$. Guth, Demeter, and Wang verified a special case of the analog of Theorem \ref{23.12.31.thm11} in the range $0\le \sigma\le \frac{3}{2}$. Guth and Maldague proved a more general version of Theorem \ref{23.12.31.thm11} in the range $0\le \sigma\le 2$, which covered the range of the parameter $\sigma$ addressed by Conjecture 2.5 in \cite{MR4153908}.

If $\sigma\ge 3$, then the function $\sum_{k=1}^N a_ke(x\cdot(k,k^2,k^3))$ is very slowly varying in the third coordinate. This means that $L^p([0,1]^2\times[0,N^{-\sigma}])$ estimates for $\sigma\ge 3$ are trivially controlled using Fubini's theorem by the sharp estimates for $2$-dimensional exponential sums with frequencies on the parabola \cite{MR3374964}. We therefore omit the proof of Theorem \ref{23.12.31.thm11} in the range $\sigma\ge 3$.

Let us state a continuous setup for small cap decoupling. For convenience, we use the same notation in \cite{guth2022small}. For $\beta \in [\frac13,\infty)$, define $\mathcal{M}^3(R^{\beta},R)$ by
\begin{equation}
    \{(\xi_1,\xi_2,\xi_3): \xi_1 \in [0,1], |\xi_2-\xi_1^2| \leq R^{-2\beta}, |\xi_3-3\xi_1\xi_2+2\xi_1^3| \leq R^{-1} \}.
\end{equation}
We will partition this set into small caps of the form
\begin{equation}\label{07.29.12}
\begin{split}
    \gamma:=\{(\xi_1,\xi_2,\xi_3) : l R^{-\beta} \leq &\xi_1 <(l+1)R^{-\beta}, 
    \\&|\xi_2-\xi_1^2| \leq R^{-2\beta}, |\xi_3-3\xi_1\xi_2+2\xi_1^3| \leq R^{-1} \}
\end{split}
\end{equation}
for some integer $l$, $0\leq l <R^{\beta}$. Denote by $\mathcal{P}_{R^{-\beta}}$ the collection of $\gamma$'s. We are ready to state the small cap decoupling theorem.
\begin{theorem}\label{23.12.31.thm12} For $1<\beta \leq 2$, $p\geq 2$ and $\epsilon>0$, we have
    \begin{equation}\label{24.01.14.14}
        \|f\|_{L^p} \leq C_{\epsilon} R^{\epsilon} \big( R^{\beta(\frac12-\frac1p)}+R^{\beta(1-\frac{4}{p})-\frac1p} \big) \big(\sum_{\gamma \in \mPb }\|f_{\gamma}\|_{L^p}^p \big)^{\frac1p}
    \end{equation}
    for any $f$ with Fourier transform supported in $\mathcal{M}^3(R^{\beta},R)$. 
    
    For $\beta >2$, $p \geq 2$, and $\epsilon>0$,
    \begin{equation}
        \|f\|_{L^p} \leq C_{\epsilon} R^{\epsilon} \big( R^{\beta(\frac12-\frac1p)}+R^{(\beta-\frac13)(1-\frac4p)}+R^{\beta(1-\frac{4}{p})-\frac1p} \big) \big(\sum_{\gamma \in \mPb }\|f_{\gamma}\|_{L^p}^p \big)^{\frac1p}
    \end{equation}
    for any $f$ with Fourier transform supported in $\mathcal{M}^3(R^{\beta},R)$.
\end{theorem}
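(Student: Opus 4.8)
\emph{Strategy.} I would prove the theorem by an induction in which $\|f\|_{L^p}$ at scale $R$ is bounded by the same quantity at a strictly smaller scale (and, in the delicate regime, with strictly fewer caps). The mechanism making this work is the affine rescaling attached to a short $\xi_1$–interval: restricting the frequency variable to an interval of length $\rho$ and applying the corresponding shear and parabolic dilation sends $\mathcal{M}^3(R^{\beta},R)$, together with its partition $\mathcal{P}_{R^{-\beta}}$, to a set of exactly the same type $\mathcal{M}^3((R')^{\beta'},R')$ with $R'<R$; hence any two–scale decomposition of the frequency support can be fed back into the inductive hypothesis. After the routine preliminaries — slightly enlarging the Fourier support, normalising $\|f_\gamma\|_{L^\infty}\lesssim 1$, pigeonholing onto a level set $U=\{|f|\sim\lambda\}$, and pruning the spatially small wave packets of each $f_\gamma$ — it suffices to prove $\lambda^p|U|\lesssim_\epsilon R^\epsilon\,D(\beta)^p\sum_\gamma\|f_\gamma\|_{L^p}^p$, where $D(\beta)$ is the claimed bound.

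\emph{The soft ranges.} For $2\le p\le 6$ the first term is elementary: projecting out $\xi_3$, the function has Fourier support in the $R^{-2\beta}$–neighbourhood of the parabola times a bounded set, and since $\beta>1$ the cylinder over a parabola cap of scale $R^{-\beta}$ meets $\mathcal{M}^3(R^{\beta},R)$ in a single $\gamma$; cylindrical $\ell^2$ Bourgain--Demeter decoupling into the $R^\beta$ parabola caps is lossless there, and H\"older in $\gamma$ supplies $R^{\beta(\frac12-\frac1p)}$. For $p\ge 12$ the flat term is obtained by composition: Bourgain--Demeter--Guth $\ell^p$ decoupling for the moment curve into the $R^{1/3}$ canonical caps $\tau$, followed inside each $\tau$ — which after rescaling is cylindrical in $\xi_3$ — by cylindrical parabola $\ell^p$ decoupling into the remaining $R^{\beta-1/3}$ caps, has dominant contribution $R^{\beta(1-\frac4p)-\frac1p}$ for $p\ge 12$ (the moment--curve step contributing its flat exponent $R^{\frac13(1-\frac7p)}$), and for $\beta>2$ it also carries $R^{(\beta-\frac13)(1-\frac4p)}$. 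These, with interpolation against $L^2$–orthogonality, leave only $6<p<12$.

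\emph{The hard range $6<p<12$.} Here neither term follows from composing known $\ell^p$ decouplings — a short computation shows the best such composition yields only the strictly larger exponent $R^{\beta-\frac16-\frac{4\beta-1}{p}}$ — and I would instead run a high--low frequency argument in the spirit of Guth--Maldague \cite{guth2022small}. Fix an intermediate scale $1\ll\rho\ll R^\beta$, set $g=\sum_\gamma|f_\gamma|^2$, and split $g=g_{\mathrm{lo}}+g_{\mathrm{hi}}$ according to whether $\widehat g$ lies in $\{|\xi|\le\rho^{-1}\}$; after pruning, $|f|^2\lesssim|g_{\mathrm{hi}}|+g_{\mathrm{lo}}$ modulo rapidly decaying tails. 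Where $|g_{\mathrm{hi}}|$ dominates on $U$, the Fourier support of $g_{\mathrm{hi}}$ is a union of difference caps $\gamma-\gamma'$ missing the $\rho^{-1}$–ball, and an $L^2$ estimate for $g_{\mathrm{hi}}$ accounting for how many such pairs land in a fixed $\rho^{-1}$–ball — a count dictated by the caps sitting at heights $\xi_3\approx\xi_1^3$ with thickness $R^{-1}$ — interpolated against the soft $L^p$ bound above, delivers exactly $R^{\beta(\frac12-\frac1p)}$ and the $R^{-1/p}$–improved flat term (and, for $\beta>2$, the exponent $(\beta-\frac13)(1-\frac4p)$). Where $g_{\mathrm{lo}}$ dominates it is essentially constant on balls dual to $\{|\xi|\le\rho^{-1}\}$, and on each such ball $f$ may be replaced by a function whose frequency support has been compressed by $\rho$; this reduces matters to the same inequality at a smaller scale, to which the inductive hypothesis applies. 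Optimising $\rho$ and iterating $O(\epsilon^{-1})$ times, with the loss at each step taken small enough to remain summable, closes the induction.

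\emph{Main obstacle.} The crux is the high-frequency incidence estimate: one must show that the difference caps $\gamma-\gamma'$ populating a fixed $\rho^{-1}$–ball are controlled as sharply as in the extremiser $\widehat{f_\gamma}=\1_\gamma$ — in particular that a $\xi_3$–slab of thickness $R^{-1}$ meets about $R^{\beta-1/3}/m^{2/3}$ of the caps ($m$ the slab index), that this interacts correctly with the $\rho$–scale localisation, and that when plugged into the high--low iteration it produces precisely $R^{\beta(1-\frac4p)-\frac1p}$ rather than the larger exponent that crude composition yields on $6<p<12$. Making this geometry and the bookkeeping of the $O(\epsilon^{-1})$ iterations cooperate — and, for $\beta>2$, propagating the extra term $R^{(\beta-\frac13)(1-\frac4p)}$ through every stage — is where the real work lies.
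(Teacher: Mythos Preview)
Your high--low framework is the right paradigm, but the paper's implementation differs from yours in two structural ways. First, the paper does \emph{not} induct on $R$ via rescaling the low part; instead it performs a broad--narrow reduction to a \emph{trilinear} level-set estimate (Theorem~\ref{24.01.20.thm22}, Proposition~\ref{24.01.18.prop23}) and then, at fixed $R$, decomposes physical space into sets $\Lambda_k$ according to which averaged square function $g_k$ dominates (Definition~\ref{07.30.def32}), running the high--low comparison across a full ladder of scales $R_k=R^{k\epsilon}$ simultaneously. Second --- and this is what resolves the obstacle you flag --- on the main range $\epsilon^{-1}\le k\le\beta\epsilon^{-1}$ the paper does not attempt a direct incidence count for difference caps. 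It instead applies trilinear restriction for $\mathcal{M}^3$ (Proposition~\ref{24.01.19.prop37}) to reach $(\sum_\theta|f_\theta|^2)^3$, peels off one factor using the $\Lambda_k$ condition, and controls the remaining \emph{bilinear} square-function integral via a two-dimensional bilinear Kakeya inequality (Lemma~\ref{24.02.04.prop38}) on the $(x_1,x_2)$-projections of wave packets, followed by the $L^4$ reverse square function for the parabola (Lemma~\ref{24.02.13.lem42}).

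Your proposed route --- linear high--low with an $L^2$ estimate on $g_{\mathrm{hi}}$ coming from cap--pair overlap counts --- is plausible, but the specific count you write (slabs in $\xi_3$ meeting $R^{\beta-1/3}/m^{2/3}$ caps) is not the relevant geometry here and you leave its verification open. For $\beta>1$ the caps are thinner than $R^{-1/3}$ in $\xi_1$, so the overlap structure of $\gamma-\gamma'$ is genuinely two-dimensional rather than governed by a one-parameter $\xi_3$-slab count; this is precisely why the paper buys transversality via broad--narrow, so that multilinear restriction plus planar bilinear Kakeya replace the delicate 3D incidence estimate you would otherwise need. What the multilinear reduction costs is the extra narrow term $R^{(\beta-\frac13)(1-\frac4p)}$, which is harmless for $1<\beta\le 2$ but is exactly the middle term appearing for $\beta>2$.
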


The exponents of $R$ of \eqref{24.01.14.14} coincide when $p=6+2/\beta$.
For $\beta \geq 2$,
\begin{equation*}
    \max\{R^{\beta(\frac12-\frac1p)},R^{\beta(1-\frac{4}{p})-\frac1p}, R^{(\beta-\frac13)(1-\frac4p)} \}=
    \begin{cases}
    R^{\beta(\frac12-\frac1p)} \; \mathrm{ for } \;p<6+\frac{2}{\beta}  \\  R^{(\beta-\frac13)(1-\frac4p)} \; \mathrm{\; for\; } 6+\frac{2}{\beta} <p <7 \\ R^{\beta(1-\frac{4}{p})-\frac1p} \; \mathrm{\; for\; } 7<p
    \end{cases}
\end{equation*}

Finally, since this paper is about small cap decoupling in the remaining parameter ranges, we record the following proposition for the parabola. 

\begin{proposition}\label{parabolathm}
    For $1 \leq \sigma \leq 2$ and $2\le p <\infty$, we have
\begin{equation*}
    \Big( \int_{[0,1] \times [0,\frac{1}{N^{\sigma}}]} \Big| \sum_{k=1}^{N} a_k e\big(x \cdot (k,k^2)\big) \Big|^p\,dx \Big)^{\frac1p} \leq C_{\epsilon}N^{\epsilon}\big(N^{\frac{\sigma}{2}(1-\frac{4}{p})}+N^{1-\frac{4}{p}} \big) (\sum_{k=1}^{N}|a_{k}|^p)^{\frac1p}
\end{equation*}
    for any $a_k \in \mathbb{C}$. 
\end{proposition}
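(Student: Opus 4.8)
The plan is to obtain the proposition by interpolating three endpoint estimates. Write $g=\sum_{k=1}^N a_k e(x\cdot(k,k^2))$, $\Omega=[0,1]\times[0,N^{-\sigma}]$, and let $T$ be the linear map $(a_k)_{k=1}^N\mapsto g|_\Omega$. The $\ell^2$ and $\ell^\infty$ endpoints are immediate: Plancherel in $x_1$ followed by Fubini in $x_2$ gives $\int_\Omega|g|^2\,dx=N^{-\sigma}\sum_k|a_k|^2$, so $\|T\|_{\ell^2\to L^2(\Omega)}=N^{-\sigma/2}$, while $|g(x)|\le\sum_k|a_k|$ gives $\|T\|_{\ell^\infty\to L^\infty(\Omega)}\le N$. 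Granting the crucial estimate $\|T\|_{\ell^4\to L^4(\Omega)}\le C_\epsilon N^\epsilon$ below, Riesz--Thorin interpolation between the $\ell^2$ and $\ell^4$ endpoints gives $\|T\|_{\ell^p\to L^p(\Omega)}\le C_\epsilon N^\epsilon N^{\frac\sigma2(1-\frac4p)}$ for $2\le p\le4$, and for $p\ge4$ one simply uses $\int_\Omega|g|^p\le(\sum_k|a_k|)^{p-4}\int_\Omega|g|^4$ together with the $L^4$ bound and H\"older in the coefficients to get $\|T\|_{\ell^p\to L^p(\Omega)}\le C_\epsilon N^\epsilon N^{1-\frac4p}$. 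Since $\sigma\le2$, the two exponents $\tfrac\sigma2(1-\tfrac4p)$ and $1-\tfrac4p$ are ordered so that for each $p$ the relevant one appears, and one arrives at the claimed bound by $C_\epsilon N^\epsilon\big(N^{\frac\sigma2(1-\frac4p)}+N^{1-\frac4p}\big)$.

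The hard part is the $L^4$ estimate $\int_\Omega|g|^4\,dx\lesssim_\epsilon N^\epsilon\sum_k|a_k|^4$. This genuinely uses the curvature of the parabola: if after Plancherel in $x_1$ one estimates the $x_2$‑integral trivially, one only gets $\int_\Omega|g|^4\le N^{-\sigma}\big(\sum_k|a_k|\big)^2\big(\sum_k|a_k|^2\big)\le N^{2-\sigma}\sum_k|a_k|^4$, which is too lossy once $\sigma<2$. Instead I would expand $|g|^4$ and integrate over $\Omega$ to obtain
\[
\int_\Omega|g|^4\,dx=\sum_{k_1+k_2=l_1+l_2}a_{k_1}a_{k_2}\overline{a_{l_1}}\,\overline{a_{l_2}}\;I(\nu),\qquad \nu=k_1^2+k_2^2-l_1^2-l_2^2,\quad I(\nu)=\int_0^{N^{-\sigma}}e(x_2\nu)\,dx_2,
\]
so that $|I(\nu)|\le\min\!\big(N^{-\sigma},(\pi|\nu|)^{-1}\big)$. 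Parametrizing the solutions of $k_1+k_2=l_1+l_2$ by $(k_1,k_2,l_1,l_2)=(a,b,a-h,b+h)$ yields the clean identity $\nu=2h(a-b-h)$.

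The diagonal term $h=0$ contributes exactly $N^{-\sigma}\big(\sum_k|a_k|^2\big)^2\le N^{1-\sigma}\sum_k|a_k|^4\le\sum_k|a_k|^4$, using $\sigma\ge1$. For $h\ne0$ the point is that $\nu$ is linear in $b$ with slope $2|h|$, so for fixed $a$ and $h$,
\[
\sum_b\big|I\big(2h(a-b-h)\big)\big|\ \le\ \sum_{|n|\le N}\min\!\Big(N^{-\sigma},\,\frac{1}{2\pi|h|\,|n|}\Big)\ \lesssim\ \frac{\log N}{|h|},
\]
uniformly in $a$, where again $\sigma\ge1$ (so $|h|\le N\le N^\sigma$) is what lets one absorb the $n=0$ term. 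A Cauchy--Schwarz splitting
\[
\sum_{a,b}|a_a|\,|a_{a-h}|\,|a_b|\,|a_{b+h}|\,|I(\nu)|\le\Big(\sum_{a,b}|a_a|^2|a_{a-h}|^2|I(\nu)|\Big)^{1/2}\Big(\sum_{a,b}|a_b|^2|a_{b+h}|^2|I(\nu)|\Big)^{1/2},
\]
combined with the previous display and with $\sum_k|a_k|^2|a_{k+h}|^2\le\sum_k|a_k|^4$ (Cauchy--Schwarz), bounds the $h$‑block by $\tfrac{C\log N}{|h|}\sum_k|a_k|^4$; summing over $0<|h|<N$ and using $\sum_{0<|h|<N}|h|^{-1}\lesssim\log N$ gives $\lesssim(\log N)^2\sum_k|a_k|^4$ for the off‑diagonal contribution. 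Adding the $h=0$ term completes the $L^4$ estimate.

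I expect the $L^4$ step to be the only real difficulty: all cancellation in the problem is squeezed out there, through the decay $|I(\nu)|\le\min(N^{-\sigma},|\nu|^{-1})$ and the linearity of $\nu$ in $b$, and the hypothesis $\sigma\ge1$ enters exactly twice — taming the diagonal via $N^{1-\sigma}\le1$, and absorbing the $n=0$ term in the decay sum — which is precisely what fixes the lower end of the admissible range. The upper restriction $\sigma\le2$ is not needed for this argument but is the range in which the stated bound is also sharp, and the remaining ingredients (the $L^2$ and $L^\infty$ endpoints, and the interpolation bookkeeping) are routine.
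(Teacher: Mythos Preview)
Your argument is correct and complete. It differs from the paper's route in one essential way: for the range $2\le p\le 4$ the paper invokes the small cap decoupling theorem for the parabola (Theorem~3.1 of \cite{MR4153908}) as a black box, after rescaling to a $B_{N^{2-\sigma}}$ neighborhood, and then does the same $L^\infty$-to-$L^4$ reduction you do for $p\ge 4$. You replace that black box by an elementary $L^4$ count---expanding $|g|^4$, parametrizing $k_1+k_2=l_1+l_2$ by a shift $h$, exploiting the linearity of $\nu=2h(a-b-h)$ in $b$, and summing the resulting $\min(N^{-\sigma},|\nu|^{-1})$---and then recovering the intermediate $p$ by Riesz--Thorin from the $\ell^2\to L^2$ and $\ell^4\to L^4$ endpoints. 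Your approach is more self-contained (no external decoupling theorem needed) and even gives a clean $(\log N)^2$ loss at $p=4$ rather than $N^\epsilon$; the paper's approach, on the other hand, would extend verbatim to any $C^2$ convex curve covered by \cite{MR4153908}, whereas your $L^4$ identity $\nu=2h(a-b-h)$ is specific to the parabola. Your observation that the hypothesis $\sigma\ge 1$ is used exactly in the diagonal and in absorbing the $n=0$ term, and that $\sigma\le 2$ plays no role in the upper bound, is also correct.
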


Note that for the case $1 \leq \sigma <2$ the critical exponent is $p=4$.
 
\subsection{Sharpness of theorems\label{sharpness}}
Let us show the sharpness of Theorem \ref{23.12.31.thm11}. We will take three examples to show the sharpness. Let us begin with the example that $a_{k}=1$ for all $k$. We will show that
\begin{equation*}
    N^{1-\frac{\max(3+\sigma,6)}{p} }   \lesssim \Big( \int_{[0,1]^2 \times [0,\frac{1}{N^{\sigma}}]} \Big| \sum_{k=1}^{N}  e\big(x \cdot (k,k^2,k^3)\big) \Big|^p\,dx \Big)^{\frac1p}.
\end{equation*}
By evaluating the exponential sum on the smaller set $[0,N^{-1}] \times [0,N^{-2}] \times [0,N^{-\max(3,\sigma)}]$, we get the bound. The second example is a random example. We take a random sequence $a_{k} \in \{-1,1\}$, and this gives the bound
\begin{equation*}
    N^{\frac12-\frac{\sigma}{p} }   \lesssim \Big( \int_{[0,1]^2 \times [0,\frac{1}{N^{\sigma}}]} \Big| \sum_{k=1}^{N}  e\big(x \cdot (k,k^2,k^3)\big) \Big|^p\,dx \Big)^{\frac1p}.
\end{equation*}
The last example is a ``lower dimensional'' example. Take
\begin{equation}
    a_k= 
    \begin{cases}
            1 \text{ for } 0 \leq k \leq N^{\sigma/3} \\
            0 \text{ otherwise }
                \end{cases}
\end{equation}
Then we have
\begin{equation}
\begin{split}
    \mathrm{LHS} &\gtrsim N^{-\frac{\sigma}{p}}
    \Big( \int_{[0,1]^2 } \Big| \sum_{k=1}^{N^{\sigma/3}} e\big(x \cdot (k,k^2)\big) \Big|^p\,dx \Big)^{\frac1p}
    \\& \gtrsim N^{-\frac{\sigma}{p}}
    \Big( \int_{[0,N^{-\sigma/3}]\times [0,N^{-2\sigma/3}] } \Big| \sum_{k=1}^{N^{\sigma/3}} e\big(x \cdot (k,k^2)\big) \Big|^p\,dx \Big)^{\frac1p}
    \\&\gtrsim N^{-\frac{(\frac13+\frac23+1)\sigma}{p}}N^{\frac{\sigma}{3}} \sim N^{-\frac{2\sigma}{p}}N^{\frac{\sigma}{3}}.
\end{split}
\end{equation}

The sharpness of Proposition \ref{parabolathm} follows from analyzing the function $f(x)=\sum_{k=1}^N e(x\cdot(k,k^2))$. The integrand satisfies $|f(x)|^p\gtrsim N$ on a box of dimensions $N^{-1}\times N^{-2}$ centered at the origin, so 
\[ \text{LHS}\gtrsim N\cdot N^{-\frac{3}{p}}\sim N^{1-\frac{4}{p}}(\sum_{k=1}^N|1|^p)^{\frac{1}{p}}. \]
The other contribution to $\text{LHS}$ comes from the $\mathcal{R}=
[-N^{1-\sigma},N^{1-\sigma}]\times[-N^{-\sigma},N^{-\sigma}]$. Indeed, let $\eta$ be a Schwartz function $\eta:\R^2\to[0,\infty)$ satisfying $|\eta|\sim 1$ on $\mathcal{R}$, $\widecheck{\eta}$ is real-valued with $|\widecheck{\eta}|\sim N^{1-2\sigma}$ on $\mathcal{R}^*=[-N^{\sigma-1},N^{\sigma-1}]\times[-N^{\sigma},N^{\sigma}]$, and $\supp \widecheck\eta\subset2\mathcal{R}^*$.  

Using $L^2$ orthogonality, we have
\begin{align*} 
\|\sum_{k=1}^Ne(x\cdot(k,k^2))&\|_{L^2(\mathcal{R})}^2\sim \int|\sum_{k=1}^Ne(x\cdot(k,k^2))|^2\eta(x)dx\\
    &\sim \sum_{k,k'} \widecheck{\eta}(k-k',k^2-(k')^2) \sim N\cdot N^{\sigma-1}N^{1-2\sigma}. 
\end{align*}
Then since $2\le p$, H\"{o}lder's inequality gives, 
\[  \|\sum_{k=1}^Ne(x\cdot(k,k^2))\|_{L^2(\mathcal{R})}\le |\mathcal{R}|^{\frac{1}{2}-\frac{1}{p}}\|\sum_{k=1}^Ne(x\cdot(k,k^2))\|_{L^p(\mathcal{R})} . \]
Combining the previous two inequalities yields 
\[ \text{LHS}\gtrsim |\mathcal{R}|^{\frac{1}{p}-\frac{1}{2}}N^{\frac{1-\sigma}{2}}
\sim N^{\frac{\sigma}{2}(1-\frac{4}{p})}(\sum_{k=1}^N|1|^p)^{\frac{1}{p}}, \]
which concludes the discussion of sharpness for Proposition \ref{parabolathm}. 

\subsection{Notations}

For $A,B,C \in \mathbb{R} \cap (0,\infty)$, $B_{A,B,C}(x)$ is defined by
\begin{equation}
    B_{A,B,C}(x):=x+\big([0,A] \times [0,B] \times [0,C]\big) \subset \R^3.
\end{equation}
When $x=(0,0,0)$, we sometimes use $B_{A,B,C}$ for $B_{A,B,C}(x)$. When $A=B=C$, we sometimes use $B_{A}^{(3)}$ or $B_{A}$ for $B_{A,B,C}$. Similarly, $B_{A,B}(x')$ is defined by
\begin{equation}
    B_{A,B}(x'):=x'+\big([0,A] \times [0,B] \big) \subset \R^2.
\end{equation}
When $x'=(0,0)$, we sometimes use $B_{A,B}$ for $B_{A,B}(x')$. When $A=B$, we sometimes use $B_{A}^{(2)}$ for $B_{A,B}$.
For a ball $B_R(a) \subset \R^3$, define the weight function by
\begin{equation}
    w_{B_R(a)}(x):=\big(1+\frac{|x-a|}{R} \big)^{-1000},
\end{equation}
and the $L^1$ normalized weight function by
\begin{equation}
    w_{B_R(a)}^{\#}(x):=\frac{1}{R^3}\big(1+\frac{|x-a|}{R} \big)^{-1000}.
\end{equation}
The weight functions $w_{B_R^{(2)}(a)}^{\#}(x_1,x_2)$ and $w_{B_R^{(1)}(a)}^{\#}(x_1)$ are defined similarly. We define an weighted integral 
\begin{equation}\label{24.11.11.111}
    \|f\|_{L^1(w^{\#}_{B_R}(a))}:= \int_{\mathbb{R}^3} |f(x)| w_{B_R(a)}^{\#}(x) \,dx.
\end{equation}
Define $\|f\|_{L^1(w_{B_R^{(2)}(a)}^{\#} \times w_{B_R^{(1)}(a)}^{\#} )}$ similarly. For a measurable set $A$,
\begin{equation}
    \fint_{A}|f|:=\frac{1}{|A|}\int |f|, \;\;\; 
\end{equation} 
and define
\begin{equation}
    \|f\|_{L^p_{\#}(B_R)}:= \Big(  \fint_{B_R}|f|^p \Big)^{\frac1p}.
\end{equation}
We use the notation
\begin{equation}
    \mathrm{RapDec}(R)=R^{-1000}.
\end{equation}
For a sequence $\{ a_i\}_{i=1}^{j}$ we introduce a notation for the geometric mean
\begin{equation}
\avprod_{i=1}^{j}a_i := \prod_{i=1}^j |a_i|^{\frac1j}.
\end{equation}

\subsection{Acknowledgements}
 CO was supported by the NSF grant DMS-1800274 and POSCO Science Fellowship of POSCO TJ Park Foundation.

\section{Reduction to a multilinear decoupling inequality}

We first reduce a global decoupling inequality to a local decoupling inequality. This argument is standard (for example, see \cite[Lemma 9]{guth2022small}), so we omit the proof.
\begin{proposition}
    Theorem \ref{23.12.31.thm12} follows from the following: 
    
    For $1 \leq \beta \leq 2$, $p\geq 2$, and $\epsilon>0$,
    \begin{equation}
        \|f\|_{L^p(B_{R^{2\beta},R^{2\beta},R})} \leq C_{\epsilon} R^{\epsilon} \big( R^{\beta(\frac12-\frac1p)}+R^{\beta(1-\frac{4}{p})-\frac1p} \big) \big(\sum_{\gamma \in \mP_{R^{-\beta}} }\|f_{\gamma}\|_{L^p(\mathbb{R}^3)}^p \big)^{\frac1p}
    \end{equation}
    for any $f$ with Fourier transform supported in $\mathcal{M}^3(R^{\beta},R)$.

    For $\beta \geq 2$, $p \geq 2$, and $\epsilon>0$,
    \begin{equation*}
    \begin{split}
&\|f\|_{L^p(B_{R^{2\beta},R^{2\beta},R})} 
\\&\leq C_{\epsilon} R^{\epsilon}
        \max\{R^{\beta(\frac12-\frac1p)},R^{\beta(1-\frac{4}{p})-\frac1p}, R^{(\beta-\frac13)(1-\frac4p)} \}
        \big(\sum_{\gamma \in \mP_{R^{-\beta}} }\|f_{\gamma}\|_{L^p(\mathbb{R}^3)}^p \big)^{\frac1p}
        \end{split}
    \end{equation*}
    for any $f$ with Fourier transform supported in $\mathcal{M}^3(R^{\beta},R)$.
\end{proposition}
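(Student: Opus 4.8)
The plan is to run the standard localization-and-summation reduction; as the text notes, it is routine, so I will only indicate the structure. The global estimate of Theorem~\ref{23.12.31.thm12} is the conclusion of the Proposition with the ball $B_{R^{2\beta},R^{2\beta},R}$ enlarged to all of $\R^3$, so it suffices to upgrade the local estimate to a global one by summing over a tiling. Concretely, fix a partition $\mathcal{B}$ of $\R^3$ into translates of $B_{R^{2\beta},R^{2\beta},R}$, so that $\|f\|_{L^p(\R^3)}^p=\sum_{B\in\mathcal{B}}\|f\|_{L^p(B)}^p$, and aim to bound each $\|f\|_{L^p(B)}$ by a $B$-localized right-hand side whose $p$-th powers sum over $\mathcal{B}$ to a constant multiple of $\sum_{\gamma}\|f_\gamma\|_{L^p(\R^3)}^p$. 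Since the local estimate is used only as a black box, the argument is the same for $1<\beta\le 2$ and $\beta\ge 2$.

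For each $B\in\mathcal{B}$ I would take $\eta_B\ge 0$ Schwartz, adapted to $B$: $\eta_B\gtrsim 1$ on $B$, $\sum_{B\in\mathcal{B}}\eta_B\lesssim 1$ pointwise, $\|\widehat{\eta_B}\|_{L^1}\lesssim 1$, and $\widehat{\eta_B}$ supported in a box of dimensions $\sim R^{-2\beta}\times R^{-2\beta}\times R^{-1}$ centered at the origin (all $\eta_B$ are translates of one fixed anisotropic bump). The one genuinely geometric point is that $f\eta_B$ still, up to a harmless dilation, obeys the Fourier-support hypothesis: $\widehat{f\eta_B}=\widehat f*\widehat{\eta_B}$ sits in $\mathcal{M}^3(R^{\beta},R)+\supp\widehat{\eta_B}$, and because the dual box $R^{-2\beta}\times R^{-2\beta}\times R^{-1}$ of $B$ is no coarser in any coordinate than a single cap --- here one needs $R^{-2\beta}\le R^{-\beta}$ and $R^{-2\beta}\le R^{-1}$, the latter using $\beta>1$ --- a short Taylor expansion of $\xi_2-\xi_1^2$ and $\xi_3-3\xi_1\xi_2+2\xi_1^3$ shows $\mathcal{M}^3(R^{\beta},R)+\supp\widehat{\eta_B}\subset\mathcal{M}^3((R')^{\beta},R')$ for some $R'\sim R$, with each fattened cap meeting only $O(1)$ caps of $\mathcal{P}_{(R')^{-\beta}}$ and $\mathcal{P}_{(R')^{-\beta}},\mathcal{P}_{R^{-\beta}}$ mutually refining to bounded multiplicity. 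Replacing $R$ by $R'\sim R$ (and $B_{R^{2\beta},R^{2\beta},R}$ by a comparable box) perturbs the $R$-power factor in the local estimate only by $O(1)$, absorbed into $C_\epsilon R^\epsilon$.

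Denoting by $\mathcal{D}$ the sum (resp.\ maximum) of the $R$-powers on the right of the local estimate, I would then apply that estimate on each $B$ (translation-invariance) to $f\eta_B$ and sum, using $\eta_B\gtrsim 1$ on $B$:
\begin{equation*}
\begin{split}
\|f\|_{L^p(\R^3)}^p=\sum_{B\in\mathcal{B}}\|f\|_{L^p(B)}^p
&\lesssim\sum_{B\in\mathcal{B}}\|f\eta_B\|_{L^p(B)}^p\\
&\le C_\epsilon^p R^{\epsilon p}\,\mathcal{D}^p\sum_{B\in\mathcal{B}}\sum_{\gamma'}\big\|(f\eta_B)_{\gamma'}\big\|_{L^p(\R^3)}^p .
\end{split}
\end{equation*}
For a fixed cap $\gamma'$, only the $O(1)$ caps $\gamma''\in\mathcal{P}_{R^{-\beta}}$ whose fattening meets $\gamma'$ contribute, and on $\gamma''+\supp\widehat{\eta_B}$ one has $\widehat{f\eta_B}=\widehat{f_{\gamma''}\eta_B}$; since Fourier restriction to a box and convolution by $\widehat{\eta_B}$ ($\|\widehat{\eta_B}\|_{L^1}\lesssim 1$) are bounded on $L^p$ ($2\le p<\infty$; $p=\infty$ is trivial), this gives $\|(f\eta_B)_{\gamma'}\|_{L^p(\R^3)}\lesssim\sum_{\gamma''\sim\gamma'}\|f_{\gamma''}\eta_B\|_{L^p(\R^3)}$. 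Finally $\sum_{B\in\mathcal{B}}\|f_{\gamma''}\eta_B\|_{L^p(\R^3)}^p=\int|f_{\gamma''}|^p\sum_B\eta_B^p\lesssim\int|f_{\gamma''}|^p\sum_B\eta_B\lesssim\|f_{\gamma''}\|_{L^p(\R^3)}^p$, and the bounded-multiplicity correspondence $\gamma''\leftrightarrow\gamma'$ collapses the double sum to $\sum_{\gamma''}\|f_{\gamma''}\|_{L^p(\R^3)}^p$. Absorbing the $O(1)$ factors into $C_\epsilon$ yields Theorem~\ref{23.12.31.thm12}.

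The only step that carries any content is the geometric check in the second paragraph --- that smoothing $f$ at the scale of $B_{R^{2\beta},R^{2\beta},R}$ keeps its Fourier support inside a dilated copy of $\mathcal{M}^3$ with the same cap structure, which is precisely what the chosen dimensions of the local ball are designed for. Everything else is the routine partition-of-unity summation and loses only $O(R^\epsilon)$.
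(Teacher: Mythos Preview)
Your proposal is correct and supplies exactly the standard localization-and-summation argument that the paper explicitly omits (``This argument is standard \ldots\ so we omit the proof''), citing \cite[Lemma~9]{guth2022small}. The only substantive check --- that the dual box $R^{-2\beta}\times R^{-2\beta}\times R^{-1}$ of $B_{R^{2\beta},R^{2\beta},R}$ is no coarser than a single cap so that smoothing by $\eta_B$ preserves the cap structure up to $O(1)$ dilation --- is handled correctly; the rest is the routine partition-of-unity summation.
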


Let $D_p(R)$ be the smallest constant such that 
\begin{equation}
        \|f\|_{L^p(B_{R^{2\beta},R^{2\beta},R})} \leq D_p(R) \big(\sum_{\gamma \in \mP_{R^{-\beta}} }\|f_{\gamma}\|_{L^p(\mathbb{R}^3)}^p \big)^{\frac1p}
    \end{equation}
    for any $f$ with Fourier transform supported in $\mathcal{M}^3(R^{\beta},R)$.
To state the multilinear decoupling constant, let us introduce transversality.
We say that three intervals $I_1,I_2,I_3 \subset [0,1]$ are $C$-transverse if
\begin{equation}
    \min_{i,j: i \neq j}\mathrm{dist}(I_i,I_j) > C>0.
\end{equation}
For $K \geq 1$, let $D_{p,\mathrm{mul}}(R,K^{-1})$ be the smallest constant such that for any $K^{-1}$-transverse $I_i$ it holds that 
\begin{equation}
        \Big\|\avprod_{i=1}^{3}f_i\Big\|_{L^p(B_{R^{2\beta},R^{2\beta},R})} \leq \dpm(R,K^{-1}) \big(\sum_{i=1}^{3}\sum_{\gamma \in \mP_{R^{-\beta}} }\|(f_i)_{\gamma}\|_{L^p(\mathbb{R}^3)}^p \big)^{\frac1p}
    \end{equation}
for any $f_i$ which is a Fourier restriction of $f$ to $ I_i \times \mathbb{R}^2$ and $f$ is a function whose Fourier support is in $\mathcal{M}^3(R^{\beta},R)$.

\begin{theorem}\label{24.01.20.thm22}
Let $\beta \geq 1$.
For given $\epsilon>0$, there exists $K \geq 1$ such that
\begin{equation}
\begin{split}
    D_p(R) \leq &C_{\epsilon}R^{\epsilon} 
\sup_{1 \leq R' \leq R} D_{p,\mathrm{mul}}(R',K^{-1})
    \\&+ C_{\epsilon}R^{\epsilon}\max\{R^{\beta(\frac12-\frac1p)},R^{\beta(1-\frac{4}{p})-\frac1p}, R^{(\beta-\frac13)(1-\frac4p)} \}.
\end{split}
\end{equation}
for all $R \geq 1$.  In particular, for $1 \leq \beta \leq 2$, we have
\begin{equation}
\begin{split}
    D_p(R) \leq &C_{\epsilon}R^{\epsilon} \sup_{1 \leq R' \leq R}D_{p,\mathrm{mul}}(R',K^{-1})
    \\&+ C_{\epsilon}R^{\epsilon}\max\{R^{\beta(\frac12-\frac1p)},R^{\beta(1-\frac{4}{p})-\frac1p} \}.
\end{split}
\end{equation}
\end{theorem}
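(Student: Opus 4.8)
The argument is a Bourgain--Guth broad--narrow induction on $R$, in which the moment curve flattens to a parabola at the bottom of the induction.

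Fix a large parameter $K = K(\epsilon)$, partition $[0,1]$ into arcs $I$ of length $K^{-1}$, each a union of caps from $\mPb$, and write $f = \sum_I f_I$. At each $x$ in the base ball $B_{R^{2\beta},R^{2\beta},R}$ I pigeonhole: either there are three $\gtrsim K^{-1}$-transverse arcs $I_1,I_2,I_3$ with $|f_{I_j}(x)| \gtrsim K^{-1}|f(x)|$ for each $j$ (the \emph{broad} alternative), in which case $|f(x)| \lesssim K^{4}\max_{\text{transv. triples}}\avprod_{j=1}^{3}|f_{I_j}(x)|$ after summing the $O(K^3)$ triples; or else every $I$ with $|f_I(x)| \gtrsim K^{-1}|f(x)|$ lies in a union $J_1 \cup J_2$ of at most two arcs of length $O(K^{-1})$ (a one-dimensional pigeonhole on the arc centres), so $|f(x)| \lesssim \sum_{i=1,2}\sum_{I \subset J_i}|f_I(x)|$ with only $O(1)$ arcs $I$ appearing (the \emph{narrow} alternative). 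Raising to the $p$-th power and integrating, the broad part is controlled by the definition of $\dpm(R,K^{-1})$ --- each $f_{I_j}$ is a Fourier restriction of $f$ to $I_j \times \R^2$, and a transverse triple satisfies $\sum_i \sum_\gamma \|(f_i)_\gamma\|_{L^p}^p \le \sum_\gamma \|f_\gamma\|_{L^p}^p$ --- giving a contribution $\lesssim_K \dpm(R,K^{-1})\big(\sum_\gamma \|f_\gamma\|_{L^p}^p\big)^{1/p}$; since $K$ depends only on $\epsilon$ this $K$-loss is an admissible $C_\epsilon$, and because the broad step is applied once per level (never compounded) it feeds the term $C_\epsilon R^\epsilon \sup_{1 \le R' \le R}\dpm(R',K^{-1})$.

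For the narrow part I would restrict to one of the $O(1)$ arcs $J$ of length $\sim K^{-1}$ and apply the lower-triangular affine symmetry of the moment curve sending $J$ to $[0,1]$. In the appropriate local/weighted norms this turns $\|f_J\|$ over $B_{R^{2\beta},R^{2\beta},R}$ into a decoupling inequality of exactly the same shape for $\mathcal M^3(\rho^{\delta},\rho)$ at the reduced radius $\rho \sim RK^{-3}$, the parameter drifting upward to $\delta = \log(R^\beta K^{-1})/\log(RK^{-3}) \ge \beta$ (using $\beta \ge \tfrac13$). I then iterate the broad--narrow step on the reduced problems --- the broad parts at every intermediate scale $R' \le R$ being absorbed into $\sup_{R' \le R}\dpm(R',K^{-1})$, and the $O(1)$ arcs at each level costing only an absolute ($K$-independent) constant --- until the arc length reaches $\sim R^{-1/3}$, equivalently until the rescaled third-coordinate radius $\rho$ reaches $1$. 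At that point the constraint $|\xi_3 - 3\xi_1\xi_2 + 2\xi_1^3| \le \rho^{-1}$ is vacuous, so the problem collapses to the classical $\ell^p L^p$ decoupling for the parabola into $\sim R^{\beta-1/3}$ genuine caps, contributing $R^{(\beta-1/3)(1/2-1/p)}$ for $p \le 6$ and $R^{(\beta-1/3)(1-4/p)}$ for $p \ge 6$. Together with the trivial flat $\ell^2$ bound $R^{\beta(1/2-1/p)}$ and a direct Fubini-plus-parabola estimate (used whenever it is preferable to terminate the recursion early), the non-multilinear contributions total at most $\max\{R^{\beta(1/2-1/p)}, R^{\beta(1-4/p)-1/p}, R^{(\beta-1/3)(1-4/p)}\}$. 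For $1 \le \beta \le 2$ the third exponent is always dominated: $(\beta-1/3)(1-4/p) \le \beta(1/2-1/p)$ for $p \le 7$ (check the endpoints $p=6,7$, both linear in $1/p$, using $\beta \le 2$ at $p=7$) and $(\beta-1/3)(1-4/p) \le \beta(1-4/p)-1/p$ for $p \ge 7$, which gives the two-term bound in that range.

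The crux is the bookkeeping of the narrow recursion. Its depth is $\sim \log R/\log K$, so every per-step loss must be an absolute constant independent of $K$: only then does the accumulated loss $C^{\log R/\log K} = R^{O(1/\log K)}$ become $\le R^\epsilon$ once $K = K(\epsilon)$ is chosen large. This forces the narrow step to be run in norms for which the moment-curve affine rescaling is genuinely loss-free and compatible with the normalization in the definition of $D_p(R)$ --- in particular reconciling the ball $B_{R^{2\beta},R^{2\beta},R}$ with its rescaled image --- and forces the two-cluster count and the $\ell^1$-to-$\ell^p$ passage to cost only $O(1)$. One must also check that the upward drift of $\delta$ under rescaling does not corrupt the base-case exponents, and that the broad losses $K^{O(1)}$, occurring once per level, are swallowed harmlessly by the $C_\epsilon R^\epsilon$ prefactor on the multilinear supremum.
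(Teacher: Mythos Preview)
Your overall strategy---Bourgain--Guth broad/narrow on $K^{-1}$-arcs, affine rescaling of the moment curve in the narrow case, and parabola decoupling to close the argument---is exactly the paper's. The organizational difference is \emph{where} the parabola decoupling is applied, and this difference creates a gap in your version as written.

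In the paper, after rescaling a narrow arc $\tau$ to $[0,1]$ one applies $D_p(R/K^3)$ \emph{with the same $\beta$}. Undoing the rescaling, this produces caps of width $K^{3\beta-1}R^{-\beta}$ (not $R^{-\beta}$), and a single step of parabola $\ell^p$-decoupling, costing $\max\{(K^{3\beta-1})^{1-4/p},(K^{3\beta-1})^{1/2-1/p}\}$, refines down to $R^{-\beta}$-caps. The recursion therefore stays entirely inside the one-parameter family $D_p(\,\cdot\,)$ at fixed $\beta$, and the broad contribution at level $m$ is genuinely $D_{p,\mathrm{mul}}(RK^{-3m},K^{-1})$ for the \emph{original} $\beta$ and $K$. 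Iterating, the accumulated parabola factors produce the $R^{(\beta-1/3)(\cdot)}$ term.

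In your version you instead track the original $R^{-\beta}$-caps through the rescaling, so the ambient parameter drifts to $\delta_m>\beta$. The broad contribution at level $m$ is then a trilinear constant for $\mathcal{M}^3(\rho_m^{\delta_m},\rho_m)$ with $\rho_m^{-\delta_m}$-caps, which is \emph{not} $D_{p,\mathrm{mul}}(R',K^{-1})$ for any $R'$ (that constant is defined with $(R')^{-\beta}$-caps). Your sentence ``the broad parts at every intermediate scale $R'\le R$ being absorbed into $\sup_{R'\le R}D_{p,\mathrm{mul}}(R',K^{-1})$'' is therefore not justified by your setup. The repair is precisely the paper's move: at each narrow step, use parabola decoupling to pass from $\rho_m^{-\delta_m}$-caps back to $\rho_m^{-\beta}$-caps (equivalently, apply $D_p$ at the smaller scale with the fixed $\beta$ and then refine). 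Once you do this the two arguments coincide.

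Two smaller remarks. First, the term $R^{\beta(1-4/p)-1/p}$ in the max does not need to be produced by the narrow analysis; the narrow iteration only outputs $\max\{R^{(\beta-1/3)(1-4/p)},R^{(\beta-1/3)(1/2-1/p)}\}$, which is already dominated by the stated three-term max, so your ``Fubini-plus-parabola early termination'' is unnecessary. Second, your endpoint check for $1\le\beta\le 2$ is fine, but to cover all $2\le p\le 7$ you should check $p=4$ (or $p=2$) rather than $p=6$ as one of the two affine endpoints.
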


\begin{proof}
    A standard broad-narrow analysis gives
    \begin{equation}
    \begin{split}
        \|f\|_{L^p(B_{R^{2\beta},R^{2\beta},R})} &\lesssim K^{100} \Big\|\avprod_{i=1}^{3}f_{\tau_i}\Big\|_{L^p(B_{R^{2\beta},R^{2\beta},R})} 
        \\&+ \big(\sum_{\tau \in \mP_{K^{-1}} }\|f_{\tau}\|_{L^p(B_{R^{2\beta},R^{2\beta},R})}^p \big)^{\frac1p}.
    \end{split}
    \end{equation}
    Let us fix $\tau \in \mathcal{P}_{K^{-1}}$. For convenience, assume that $\tau$ contains the origin.
    We apply an affine transformation $T$ mapping $\tau$ to $[0,1]^3$. Define $\hat{g}(\xi):=\hat{f}(T^{-1}\xi)$. After this change of variables, we have
    \begin{equation}
        \|f_{\tau}\|_{L^p(B_{R^{2\beta},R^{2\beta},R})} \sim \|g\|_{L^p(B_{R^{2\beta}/K,R^{2\beta}/K^2,R/K^3})}.
    \end{equation}
    Note that
    \begin{equation}
        (R/K^3)^{2\beta} \leq R^{2\beta}/K^2.
    \end{equation}
    We apply the definition of $D_p(R)$ and obtain
    \begin{equation}
        \|g\|_{L^p(B_{R^{2\beta}/K,R^{2\beta}/K^2,R/K^3})}
        \leq D_p(R/K^3)
        \big(\sum_{\gamma' \in \mP_{K^{3\beta}R^{-\beta}} } \|g_{\gamma'}\|_{L^p}^p \big)^{\frac1p}.
    \end{equation}
    After changing back to the original variables, we obtain
    \begin{equation}
        \|f_{\tau}\|_{L^p} \lesssim D_p(R/K^3)
        \big(\sum_{\gamma'' \in \mP_{K^{3\beta-1}R^{-\beta}} } \|f_{\gamma''}\|_{L^p}^p \big)^{\frac1p}.
    \end{equation}
    By the decoupling for the parabola (\cite{MR3374964}),
    \begin{equation*}
        \|f_{\gamma''}\|_{L^p} \lesssim_{\epsilon} K^{\epsilon} \max\{(K^{3\beta-1})^{1-\frac{4} {p}}, (K^{3\beta-1})^{\frac12-\frac1p} \} \big(\sum_{\gamma \in \mPb: \gamma \subset \gamma'' }\|f_{\gamma}\|_{L^p}^p \big)^{\frac1p}.
    \end{equation*}
    To summarize,
    \begin{equation*}
    \begin{split}
        \|f\|_{L^p(B_{R^{2\beta},R^{2\beta},R})} &\lesssim_{\epsilon} K^{100} \Big\|\avprod_{i=1}^{3}f_{\tau_i}\Big\|_{L^p(B_{R^{2\beta},R^{2\beta},R})} 
        \\&+ K^{\epsilon}D_p(RK^{-3})\max\{(K^{3\beta-1})^{1-\frac{4} {p}}, (K^{3\beta-1})^{\frac12-\frac1p} \} \big(\sum_{\gamma \in \mP_{R^{-\beta}} }\|f_{\gamma}\|_{L^p}^p \big)^{\frac1p}.
    \end{split}
    \end{equation*}
    By the definition of $D_p(R)$, we have
    \begin{equation}
        D_p(R) \lesssim_{\epsilon} 
        D_{p,\mathrm{mul}}(R,K^{-1})+K^{\epsilon}
        D_p(RK^{-3})\max\{(K^{3\beta-1})^{1-\frac{4} {p}}, (K^{3\beta-1})^{\frac12-\frac1p}\}.
    \end{equation}
    We apply this inequality repeatedly, and obtain
    \begin{equation}
        D_p(R) \lesssim_{\epsilon} 
       \sup_{1 \leq R_1 \leq R} D_{p,\mathrm{mul}}(R_1,K^{-1})+R^{\epsilon}
       \max\{(R^{\beta-\frac13})^{1-\frac{4} {p}}, (R^{\beta-\frac13})^{\frac12-\frac1p}\}.
    \end{equation}
    This completes the proof.
\end{proof}

Let $I_1,I_2,I_3$ be $C$-transverse. Define
\begin{equation}
    U_{\alpha,\mathrm{mul} }:=\big\{x \in B_{R^{2\beta},R^{2\beta},R}: |f_1(x)f_2(x)f_3(x)|^{\frac13} \sim \alpha \big\}.
\end{equation}
We will prove a level set estimate.

\begin{proposition}\label{24.01.18.prop23}
For $p \geq 2$ and for any $\alpha >0$, we have
    \begin{equation*}
        \alpha^p|U_{\alpha,\mathrm{mul} }| \leq C_{\epsilon} \max \{R^{p \beta(\frac12-\frac1p)},R^{p\beta(1-\frac4p)-1} \} R^{\epsilon} \sum_{\gamma} \|f_{\gamma}\|_{L^4}^4 \big( \sup_{\gamma} \|f_{\gamma}\|_{L^{\infty}} \big)^{{p-4}}.
    \end{equation*}
\end{proposition}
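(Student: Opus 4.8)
The plan is to prove this multilinear level-set estimate by combining a broad-point/transversality argument on the moment curve with square-function (or $L^4$-orthogonality) information coming from the parabola. Since the three arcs $I_1,I_2,I_3$ are $C$-transverse, the product $f_1f_2f_3$ enjoys a trilinear Kakeya / Córdoba-type gain over the trivial bound; on the third coordinate (which is governed by a nondegeneracy condition of size $R^{-1}$) the curve $\mathcal{M}^3$ looks three-dimensional, and this is exactly where the extra savings beyond the parabola must come from.

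First I would pass from the $L^p$ level-set to an $L^4$ statement: write $\alpha^p |U_{\alpha,\mathrm{mul}}| = \alpha^4 |U_{\alpha,\mathrm{mul}}| \cdot \alpha^{p-4}$ and bound $\alpha \lesssim \bigl(\sup_\gamma \|f_\gamma\|_{L^\infty}\bigr)$ pointwise on $U_{\alpha,\mathrm{mul}}$ up to the number of small caps, absorbing the combinatorial factor into $R^\epsilon$; this reduces matters to estimating $\alpha^4 |U_{\alpha,\mathrm{mul}}| \lesssim \int_{U_{\alpha,\mathrm{mul}}} |f_1 f_2 f_3|^{4/3} \le \|\,|f_1f_2f_3|^{1/3}\,\|_{L^4}^4$, i.e. to a purely $L^4$ trilinear bound of the form $\|\prod_{i=1}^3 f_{\tau_i}\|_{L^4(B_{R^{2\beta},R^{2\beta},R})}^4 \lesssim_\epsilon R^\epsilon \max\{R^{4\beta(\frac12-\frac14)}, R^{4\beta(1-\frac44)-1}\} \sum_\gamma \|f_\gamma\|_{L^4}^4$. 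Note $R^{4\beta(\frac12-\frac14)} = R^\beta$ and $R^{4\beta(1-1)-1} = R^{-1}$, so the claimed constant is just $\max\{R^\beta, R^{-1}\} = R^\beta$ for $\beta \ge 0$; thus the real target is a clean flat $L^4$ estimate with loss $R^{\beta+\epsilon}$.

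Next, for the $L^4$ bound I would expand $|f_1 f_2 f_3|^2 = (f_1 f_2 f_3)\overline{(f_1 f_2 f_3)}$ and use Plancherel: the Fourier support of $f_1 f_2 f_3$ is a sum $\xi^{(1)} + \xi^{(2)} + \xi^{(3)}$ with $\xi^{(i)}$ in the $I_i$-portion of $\mathcal{M}^3(R^\beta,R)$. The transversality of the $I_i$ forces the map $(\xi^{(1)},\xi^{(2)},\xi^{(3)}) \mapsto \xi^{(1)}+\xi^{(2)}+\xi^{(3)}$ to be finitely-overlapping when the three points range over distinct $R^{-\beta}$-caps, so a given frequency of the product is hit by $O_C(1)$ triples of caps; this is the standard transversality-to-local-constancy input and gives, via Cauchy–Schwarz in the cap index, a bound by $\prod_i \bigl(\sum_{\gamma \subset I_i}\|f_\gamma\|_{L^4}^4\bigr)^{1/4} \cdot (\text{overlap factor})^{1/4}$ after invoking the flat/parabola $L^4$ decoupling within each $I_i$. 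The overlap factor is where the $R^\beta$ loss enters: summing over which $R^{-\beta}$-cap in the first variable one sits in costs $R^\beta$ caps, while the transversal second and third choices are essentially free. I would feed in the sharp $L^4$ decoupling for the parabola (Theorem of \cite{MR3374964}, as quoted in the proof of Theorem~\ref{24.01.20.thm22}) to handle the within-$I_i$ summation, since for $\mathcal{M}^3$ restricted to a short arc and projected appropriately the relevant $L^4$ estimate is the parabolic one.

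The main obstacle I expect is bookkeeping the geometry of the frequency sum precisely enough to certify the $O_C(1)$ overlap and to extract exactly the factor $R^\beta$ (and not, say, $R^{2\beta}$): one must use all three defining inequalities of $\mathcal{M}^3(R^\beta,R)$ — the $R^{-2\beta}$ window in $\xi_2$ and the $R^{-1}$ window in $\xi_3$ — together with transversality, to see that once the first coordinates of two of the three points are fixed to $R^{-\beta}$-accuracy, the third is essentially determined and the remaining freedom in $(\xi_2,\xi_3)$ is confined to a box of the right volume. Getting this counting tight, and making sure the $L^\infty$ powers $(\sup_\gamma \|f_\gamma\|_{L^\infty})^{p-4}$ are peeled off cleanly on the level set (so that the $L^4$ mass $\sum_\gamma \|f_\gamma\|_{L^4}^4$ is what remains), is the crux; the rest is the routine broad–narrow/Plancherel machinery.
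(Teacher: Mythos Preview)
Your reduction to an $L^4$ statement contains a fatal numerical error. You write that you will ``bound $\alpha \lesssim \bigl(\sup_\gamma \|f_\gamma\|_{L^\infty}\bigr)$ pointwise on $U_{\alpha,\mathrm{mul}}$ up to the number of small caps, absorbing the combinatorial factor into $R^\epsilon$.'' But the number of small caps is $R^\beta$, not $R^\epsilon$; the only pointwise bound available is $\alpha \lesssim R^\beta \sup_\gamma\|f_\gamma\|_{L^\infty}$. Using this together with your (correct) trilinear $L^4$ bound $\alpha^4|U_{\alpha,\mathrm{mul}}|\lesssim R^\beta\sum_\gamma\|f_\gamma\|_{L^4}^4$ yields
\[
\alpha^p|U_{\alpha,\mathrm{mul}}|\lesssim R^{\beta(p-4)}R^\beta\big(\sup_\gamma\|f_\gamma\|_{L^\infty}\big)^{p-4}\sum_\gamma\|f_\gamma\|_{L^4}^4
= R^{\beta(p-3)}\big(\sup_\gamma\|f_\gamma\|_{L^\infty}\big)^{p-4}\sum_\gamma\|f_\gamma\|_{L^4}^4,
\]
and $R^{\beta(p-3)}$ is strictly larger than the target $\max\{R^{\beta(\frac{p}{2}-1)},R^{\beta(p-4)-1}\}$ for every $p>4$. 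At the critical exponent $p_c=6+2/\beta$ the target is $R^{2\beta+1}$ while your argument gives $R^{3\beta+2}$, off by $R^{\beta+1}$. (There is also some confusion in your $L^4$ step: expanding $|f_1f_2f_3|^2$ and using Plancherel gives an $L^6$ bound on the geometric mean, not $L^4$.)

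What the paper does instead is precisely designed to avoid paying the full factor $R^\beta$ in the pointwise bound on $\alpha$. It introduces a multi-scale high/low decomposition $B_{R^{2\beta}}^{(2)}\times B_R^{(1)}=\bigcup_{k=0}^{M-1}\Lambda_k$ (Definition~\ref{07.30.def32}) according to the size of averaged square functions $g_k$. On $\Lambda_k$ one has the refined bound $\alpha\lesssim R^{O(\epsilon)}R_k^{1/2}R^{\beta/2}\sup_\gamma\|f_\gamma\|_{L^\infty}$ (Lemma~\ref{24.01.18.lem411}), which is much better than $R^\beta$ when $k$ is small. The price is that for large $k$ the bound on $\alpha$ deteriorates, and this is compensated by the \emph{high lemma} (Corollary~\ref{24.01.15.cor36}), which exploits $L^2$-orthogonality of the high-frequency pieces of $\sum_{\gamma_k}|f_{\gamma_k}|^2$ to control $\int_{\Lambda_k}|g_k|^2$ by $\sum_{\gamma_k}\|f_{\gamma_k}\|_{L^4}^4$. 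The argument on each $\Lambda_k$ uses the trilinear $L^6$ restriction estimate (Proposition~\ref{24.01.19.prop37}) rather than $L^4$, then feeds in bilinear Kakeya (Lemma~\ref{24.02.04.prop38}) and the parabola $L^4$ reverse square function (Lemma~\ref{24.02.13.lem42}) to reach $\sum_\gamma\|f_\gamma\|_{L^4}^4$. The balance between the $\alpha$-bound and the high lemma is what produces the correct exponent $R^{2\beta+1}$ at $p_c$; your flat $L^4$ approach has no mechanism to exploit this trade-off.
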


\begin{proof}[Proof that Proposition \ref{24.01.18.prop23} implies Theorem \ref{23.12.31.thm12}]
For $p \geq 2$, by a standard pigeonholing argument (for example, see \cite[Proposition 4]{guth2022small}), Proposition \ref{24.01.18.prop23} implies
    \begin{equation}
        \alpha^p|U_{\alpha,\mathrm{mul} }| \leq C_{\epsilon} \max \{R^{p \beta(\frac12-\frac1p)},R^{p\beta(1-\frac4p)-1} \} R^{\epsilon} \sum_{\gamma} \|f_{\gamma}\|_{L^p}^p.
    \end{equation}    Since this is true for any $\alpha>0$, this implies
    \begin{equation*}
        \Big\|\avprod_{i=1}^{3}f_i\Big\|_{L^p(B_{R^{2\beta},R^{2\beta},R})}^p \leq  
        C_{\epsilon} \max \{R^{p \beta(\frac12-\frac1p)},R^{p\beta(1-\frac4p)-1} \} R^{\epsilon}
        \sum_{\gamma \in \mP_{R^{-\beta}} }\|f_{\gamma}\|_{L^p}^p. 
    \end{equation*}
    By the definition of $D_{p,\mathrm{mul}}(R,K^{-1})$, this says that
    \begin{equation}
        D_{p,\mathrm{mul}}(R,K^{-1}) \leq C_{\epsilon} \max \{R^{ \beta(\frac12-\frac1p)},R^{\beta(1-\frac4p)-\frac1p} \} R^{\epsilon}.
    \end{equation}
    Theorem \ref{24.01.20.thm22} gives
    \begin{equation}
        D_p(R) \leq C_{\epsilon}R^{\epsilon}
       \max\{R^{\beta(\frac12-\frac1p)},R^{\beta(1-\frac{4}{p})-\frac1p}, R^{(\beta-\frac13)(1-\frac4p)} \}.
    \end{equation}
    This completes the proof.
\end{proof}

\section{Preliminary}

Let us introduce some notations. Let $\beta \geq 1$. For technical reasons, let $\epsilon$ be a small positive number such that $\beta/\epsilon \in \mathbb{N}$.
We introduce intermediate scales
\begin{equation}
R^{0} \leq 
    R^{\epsilon} \leq R^{2\epsilon} \leq R^{3\epsilon} \leq \cdots \leq R^{(M-1)\epsilon} \leq R^{M\epsilon}
\end{equation}
where $M$ is a number such that $R^{M\epsilon}=R^{\beta}$. For simplicity, we introduce $R_k:=R^{k\epsilon}$. Note that $R_{M}=R^{\beta}$. Define $\mathcal{P}_{R_k^{-1}}$ to be a collection of $\gamma_k$ defined by
\begin{equation*}
\begin{split}
    \gamma_k:=\{(\xi_1,\xi_2,\xi_3) : l R_k^{-1} \leq &\xi_1 <(l+1)R_k^{-1}, 
    \\&|\xi_2-\xi_1^2| \leq R_k^{-2}, |\xi_3-3\xi_1\xi_2+2\xi_1^3| \leq \max \{R_k^{-3}, R^{-1} \} \}.
\end{split}
\end{equation*}
Note that $\gamma_M=\gamma$ where $\gamma$ is defined in \eqref{07.29.12}. Recall \eqref{24.11.11.111}.

\begin{definition}\label{24.01.25.def32}
For each $k=0,1,\ldots, \epsilon^{-1}$,
    define the square function
    \begin{equation}
        g_{k}(x):=\Big\|\sum_{\gamma_{k} \in \mP_{R_k^{-1}} }|f_{\gamma_{k}}|^2\Big\|_{L^1(w^{\#}_{B_{R_{k} }(x)})}, \;\;\; x \in \mathbb{R}^3.
    \end{equation}
    For each $k= \epsilon^{-1}+1,\ldots, M-1$, define
    \begin{equation}
        g_{k}(x):=\Big\|\sum_{\gamma_{k} \in \mP_{R_k^{-1}} }|f_{\gamma_{k}}|^2\Big\|_{L^1(w^{\#}_{B_{R_{k} }^{(2)}(x_1,x_2)} \times w^{\#}_{B_{R^{1+\epsilon}}^{(1)}(x_3)} )  }, \;\;\; x \in \mathbb{R}^3.
    \end{equation}
\end{definition}

\begin{definition}\label{07.30.def32}
Let $A$ be a sufficiently large constant.
The high set is defined by
\begin{equation}
    \Lambda_{M-1}:=\Big\{x \in B_{R^{2\beta}}^{(2)} \times B_R^{(1)} : A\sum_{\gamma \in \mPb}\|f_{\gamma}\|_{\infty}^2 \leq g_{M-1}(x) \Big\}.
\end{equation}
For each $k=1,\ldots,M-2$, we recursively define
\begin{equation}
    \Lambda_k:= \Big\{ x \in \big( B_{R^{2\beta}}^{(2)} \times B_R^{(1)} \big) \setminus \bigcup_{j=k+1}^{M-1} \Lambda_j : A^{M-k}\sum_{\gamma \in \mPb}\|f_{\gamma}\|_{\infty}^2 \leq g_k(x) \Big\}.
\end{equation}
Define the low set by
\begin{equation}
    \Lambda_0:= \big( B_{R^{2\beta}}^{(2)} \times B_R^{(1)} \big) \setminus \bigcup_{j=1}^{M-1} \Lambda_j.
\end{equation}
\end{definition}

\subsection{A high lemma}

Let us state elementary lemmas and definitions.

Let $\rho:\mathbb{R}^2 \rightarrow [0,\infty)$ be a smooth function such that
\begin{enumerate}
    \item $\mathrm{supp}(\rho) \subset [-10,10]^2$
    \item $\rho=1$ on the set $[-1,1]^2$
\end{enumerate}
Let $T_{k}$ be the linear transformation mapping $[-R_{k}^{-1},R_{k}^{-1}]^2$ to $[-1,1]^2$. Define
\begin{equation}
    {\rho}_{\leq R_k^{-1}}(\xi_1,\xi_2):={\rho}\big(T_{k}^{-1}(\xi_1,\xi_2) \big)
\end{equation}
so that the support of ${\rho}_{\leq R_k^{-1}}$ is contained in $[-10R_k^{-1},10R_k^{-1}]$. By a change of variables, we can see that $\|\widehat{\rho}_{ \leq R_k^{-1} }  \|_{L^1} \sim 1$.
For a Schwartz function $F: \R^3 \rightarrow \mathbb{C}$, define
\begin{equation}\label{07.29.310}
    F*\widehat{\rho}_{>R_k^{-1}}(x):=F(x)-F* \widehat{\rho}_{\leq R_k^{-1} }(x),
\end{equation}
where
\begin{equation*}
\begin{split}
    F* \widehat{\rho}_{\leq R_k^{-1} }(x)&
    =\int_{\R^2} F(x_1-y_1,x_2-y_2,x_3)\widehat{\rho}_{\leq R_k^{-1} }(y_1,y_2)\,dy_1dy_2.
\end{split}
\end{equation*}
This convolution is somewhat different from the one in the literature. The function $\hat{\rho}_{ \leq R_k^{-1}}$ is a function of varaibles $y_1,y_2$, so the convolution involves only two variables.
Lastly, we define the high part of the square function. For $k=0,1,\ldots,\epsilon^{-1}$,
\begin{equation}
    g_{k}^h(x_1,x_2,x_3):=\Big\|\sum_{\gamma_{k} \in \mP_{R_k^{-1}} }|f_{\gamma_{k}}|^2 * \widehat{\rho}_{>R_{k+1}^{-1}} \Big\|_{L^1(w^{\#}_{B_{R_{k} }(x)})}.
\end{equation}
For $k=\epsilon^{-1}+1,\ldots,M-1$,
\begin{equation}
    g_{k}^h(x_1,x_2,x_3):=\Big\|\sum_{\gamma_{k} \in \mP_{R_k^{-1}} }|f_{\gamma_{k}}|^2 * \widehat{\rho}_{>R_{k+1}^{-1}} \Big\|_{L^1(w^{\#}_{B_{R_{k} }^{(2)}(x_1,x_2)} \times w^{\#}_{B_{R^{1+\epsilon}}^{(1)}(x_3)} )  }.
\end{equation}
We have finished introducing the definitions. 

\begin{lemma}[cf. Corollary 1 of \cite{guth2022small}]\label{24.01.15.38}
Let $x \in \Lambda_{k}$ for some $k=1,\ldots,M-1$. Then we have
\begin{equation}
    g_{k}(x) \lesssim g_{k}^h(x).
\end{equation}    
\end{lemma}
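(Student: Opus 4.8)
The plan is to show that on the high set $\Lambda_k$, the "low part" of the square function is negligible compared to $g_k(x)$ itself, so that $g_k(x) \lesssim g_k^h(x)$ follows by the triangle inequality. Write $g_k(x) \leq g_k^h(x) + g_k^\ell(x)$, where the low part $g_k^\ell$ is defined by replacing $|f_{\gamma_k}|^2 * \widehat{\rho}_{>R_{k+1}^{-1}}$ with $|f_{\gamma_k}|^2 * \widehat{\rho}_{\leq R_{k+1}^{-1}}$ in the definition of $g_k$. The goal is then to prove the pointwise bound $g_k^\ell(x) \lesssim \sum_{\gamma \in \mPb} \|f_\gamma\|_\infty^2$, with an implied constant independent of $A$; since $x \in \Lambda_k$ means $A^{M-k} \sum_{\gamma}\|f_\gamma\|_\infty^2 \leq g_k(x)$, choosing $A$ large enough absorbs the low part: $g_k^\ell(x) \leq \tfrac12 g_k(x)$, whence $g_k(x) \leq 2 g_k^h(x)$.

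**Key steps.** First I would unwind the convolution: $|f_{\gamma_k}|^2 * \widehat{\rho}_{\leq R_{k+1}^{-1}}$ is a function whose Fourier transform (in the $(x_1,x_2)$ variables) is supported in the ball of radius $\sim R_{k+1}^{-1}$, because $|f_{\gamma_k}|^2$ has Fourier support in a set of diameter $\sim R_k^{-1}$ in the first two frequency coordinates (the $\xi_1$-width of $\gamma_k$ is $R_k^{-1}$, hence the $\xi_1,\xi_2$-extent of the autocorrelation is $\lesssim R_k^{-1} < R_{k+1}^{-1}$ — wait, one must be careful about which scale dominates; since $R_{k+1} = R^{(k+1)\epsilon} \geq R_k$, we have $R_{k+1}^{-1} \leq R_k^{-1}$, so actually the mollifier at scale $R_{k+1}^{-1}$ is \emph{finer} than the Fourier support of $|f_{\gamma_k}|^2$). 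So the relevant point is the reverse: $|f_{\gamma_k}|^2 * \widehat\rho_{\leq R_{k+1}^{-1}}$ captures the part of $|f_{\gamma_k}|^2$ with Fourier support within $\sim R_{k+1}^{-1}$ of the origin, which at scale $R_{k+1}$ corresponds to $|f_{\gamma_k}|^2$ being essentially constant on $R_{k+1}$-balls. Next, since $\|\widehat{\rho}_{\leq R_{k+1}^{-1}}\|_{L^1} \sim 1$, Young's inequality gives the crude pointwise bound $\| |f_{\gamma_k}|^2 * \widehat{\rho}_{\leq R_{k+1}^{-1}}\|_{L^\infty} \lesssim \| |f_{\gamma_k}|^2\|_{L^\infty} = \|f_{\gamma_k}\|_\infty^2$. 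Then I would sum over $\gamma_k \in \mP_{R_k^{-1}}$ and use that each $\|f_{\gamma_k}\|_\infty^2 \lesssim \sum_{\gamma \subset \gamma_k} \|f_\gamma\|_\infty^2$ (a flat $\ell^2$-to-$\ell^\infty$ type estimate after local constancy, or simply that $\|f_{\gamma_k}\|_\infty \leq \sum_{\gamma\subset\gamma_k}\|f_\gamma\|_\infty$ composed with Cauchy–Schwarz), which telescopes to $\lesssim R^{O(\epsilon)}\sum_{\gamma\in\mPb}\|f_\gamma\|_\infty^2$. Finally, applying the $L^1$-normalized weighted integral (which has total mass $1$) does not increase the $L^\infty$ bound, so $g_k^\ell(x) \lesssim R^{O(\epsilon)}\sum_\gamma \|f_\gamma\|_\infty^2$. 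The extra $R^{O(\epsilon)}$ loss is harmless since $A^{M-k}$ with $M \sim \beta/\epsilon$ can be made to beat any fixed power of $R^\epsilon$... actually no — I should be more careful and get a clean bound with no $R^\epsilon$ loss, by summing $\|f_{\gamma_k}\|_\infty^2 \lesssim \#\{\gamma\subset\gamma_k\} \cdot \max_{\gamma\subset\gamma_k}\|f_\gamma\|_\infty^2$ only after noting the number of such $\gamma$ is controlled; alternatively, the reference to Corollary 1 of \cite{guth2022small} suggests the argument there gives the implied constant as an absolute constant, and $A$ is chosen after $\epsilon$ precisely so that $A^{M-k}$ dominates. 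I would follow that route: fix $\epsilon$, then choose $A$ depending on $\epsilon$ so that $A \geq (\text{implied constant}) \cdot R^\epsilon$ fails since $R$ is not bounded — so the clean statement must genuinely avoid the $R^\epsilon$, meaning I need the sharp bound $g_k^\ell(x) \leq C \sum_\gamma \|f_\gamma\|_\infty^2$ with $C$ absolute.

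**Main obstacle.** The delicate point is obtaining the low-part bound $g_k^\ell(x) \lesssim \sum_{\gamma \in \mPb}\|f_\gamma\|_\infty^2$ with an \emph{absolute} implied constant (no $R^\epsilon$), since the number of caps $\gamma_k$ and of subcaps $\gamma \subset \gamma_k$ is polynomial in $R$. The resolution, as in \cite{guth2022small}, is that $|f_{\gamma_k}|^2 * \widehat\rho_{\leq R_{k+1}^{-1}}$ should not be bounded cap-by-cap via Young, but rather one uses that $\sum_{\gamma_k}|f_{\gamma_k}|^2$ convolved with the finer mollifier, integrated against the $L^1$-normalized weight, compares to a genuine local average of $\sum_{\gamma_k}|f_{\gamma_k}|^2$ over an $R_{k+1}$-ball, and local constancy plus the nesting $\gamma \subset \gamma_{k} $ together with $L^2$-orthogonality at scale $R_{k+1}$ convert this into $\sum_\gamma \fint_{B_{R_{k+1}}} |f_\gamma|^2$, which is $\leq \sum_\gamma \|f_\gamma\|_\infty^2$. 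Getting the orthogonality/local-constancy bookkeeping exactly right — so that the geometry of $\mathcal{M}^3(R^\beta,R)$ (in particular the anisotropic cap dimensions $R_k^{-1}\times R_k^{-2}\times\max\{R_k^{-3},R^{-1}\}$ and the split definition of $g_k$ across the ranges $k\leq \epsilon^{-1}$ and $k>\epsilon^{-1}$) is respected — is the crux, and it is precisely where the definition of the weight $w^\#_{B_{R_k}^{(2)} }\times w^\#_{B_{R^{1+\epsilon}}^{(1)}}$ in the second regime is designed to make the third-coordinate convolution trivial.
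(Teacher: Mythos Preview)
Your overall decomposition $g_k \le g_k^h + g_k^\ell$ and the plan to show $g_k^\ell(x)$ is small on $\Lambda_k$ match the paper. The gap is in your resolution of the ``main obstacle'' for $1\le k\le M-2$. You claim that $L^2$-orthogonality at scale $R_{k+1}$ converts $\sum_{\gamma_k}|f_{\gamma_k}|^2 * \widehat\rho_{\le R_{k+1}^{-1}}$ into $\sum_{\gamma}\fint|f_\gamma|^2$ at the \emph{finest} scale $\gamma\in\mPb$. This is false: the mollifier at frequency scale $R_{k+1}^{-1}$ only separates caps at scale $R_{k+1}^{-1}$, so orthogonality gives
\[
\Big||f_{\gamma_k}|^2 * \widehat\rho_{\le R_{k+1}^{-1}}(y)\Big| \lesssim \sum_{\gamma_{k+1}\subset\gamma_k}|f_{\gamma_{k+1}}|^2 * |\widehat\rho_{\le R_{k+1}^{-1}}|(y),
\]
i.e.\ you drop exactly one scale to $\gamma_{k+1}$, not all the way to $\gamma$. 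From $\sum_{\gamma_{k+1}}\|f_{\gamma_{k+1}}\|_\infty^2$ there is no absolute-constant route to $\sum_{\gamma}\|f_\gamma\|_\infty^2$; the trivial bound loses a factor $\sim R^\beta/R_{k+1}$, which is a genuine power of $R$, not $R^{O(\epsilon)}$.

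The missing idea is that the definition of $\Lambda_k$ carries a second piece of information you never use: $x\in\Lambda_k$ also means $x\notin\Lambda_{k+1}$, hence $g_{k+1}(x) < A^{M-k-1}\sum_\gamma\|f_\gamma\|_\infty^2$. The paper's argument is: the one-step orthogonality above yields $g_k^\ell(x)\le C\,g_{k+1}(x)$ with $C$ absolute; combining with $x\notin\Lambda_{k+1}$ gives $g_k^\ell(x) < C A^{M-k-1}\sum_\gamma\|f_\gamma\|_\infty^2$; and since $x\in\Lambda_k$ also gives $g_k(x)\ge A^{M-k}\sum_\gamma\|f_\gamma\|_\infty^2$, one obtains $A<C$ if the low part were to dominate, a contradiction for $A$ large. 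Your proof works as written only when $k=M-1$, where $R_{k+1}=R^\beta$ and the one-step orthogonality already lands on the finest caps.
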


\begin{proof} 
Let us consider the case that $k=M-1$.
Let $x \in \Lambda_{M-1}$. We write
\begin{equation}
\sum_{\gamma_{M-1}}|f_{\gamma_{M-1}}|^2    =\sum_{\gamma_{M-1}}|f_{\gamma_{M-1}}|^2 * \widehat{\rho}_{>R_{M}^{-1}}+
\sum_{\gamma_{M-1}}|f_{\gamma_{M-1}}|^2 * \widehat{\rho}_{\leq R_{M}^{-1}}. 
\end{equation}
By taking the integral on both sides, we have
\begin{equation*}
    g_{M-1}(x) \leq g_{M-1}^h(x)+
    \Big\|\sum_{\gamma_{M-1}}|f_{\gamma_{M-1}}|^2 * \widehat{\rho}_{\leq R_{M}^{-1}} \Big\|_{L^1(w^{\#}_{B_{R_{M-1} }^{(2)}(x_1,x_2)} \times w^{\#}_{B_{R^{1+\epsilon} }^{(1)}(x_3)})}.
\end{equation*}
For a contradiction, assume that
\begin{equation}\label{24.11.15.313}
    g_{M-1}(x) \lesssim  \Big\|\sum_{\gamma_{M-1}}|f_{\gamma_{M-1}}|^2 * \widehat{\rho}_{\leq R_{M}^{-1}} \Big\|_{L^1(w^{\#}_{B_{R_{M-1} }^{(2)}(x_1,x_2)} \times w^{\#}_{B_{R^{1+\epsilon} }^{(1)}(x_3)})}.
\end{equation}
By the $L^2$-orthogonality (cf. \cite[Lemma 4]{guth2022small}), for every $y \in \mathbb{R}^3$, we have
\begin{equation}
    \Big||f_{\gamma_{M-1}}|^2 * \widehat{\rho}_{\leq R_{M}^{-1}}(y)\Big| \lesssim 
\sum_{\gamma_M \subset \gamma_{M-1} } |f_{\gamma_{M}}|^2 * \big|\widehat{\rho}_{\leq R_{M}^{-1}}\big| (y).
\end{equation}
So, \eqref{24.11.15.313} is bounded by
\begin{equation}
    \lesssim \Big\|\sum_{\gamma_{M}}|f_{\gamma_{M}}|^2 * \big|\widehat{\rho}_{\leq R_{M}^{-1}}\big| \Big\|_{L^1(w^{\#}_{B_{R_{M-1} }^{(2)}(x_1,x_2)} \times w^{\#}_{B_{R^{1+\epsilon} }^{(1)}(x_3)})} \lesssim \sum_{\gamma_M}\|f_{\gamma_M}\|_{\infty}^2.
\end{equation}
In the last inequality, we used $\|\widehat{\rho}_{ \leq R_k^{-1} }  \|_{L^1} \sim 1$.
Since $R_M=R^{\beta}$, we have
\begin{equation}
    g_{M-1}(x) \leq C\sum_{\gamma \in \mPb }\|f_{\gamma}\|_{\infty}^2.
\end{equation}
By the definition of $\Lambda_{M-1}$ and $x \in \Lambda_{M-1}$,
\begin{equation}
    A\sum_{\gamma \in \mPb }\|f_{\gamma}\|_{\infty}^2 \leq g_{M-1}(x).
\end{equation}
    Since $A$ is a sufficiently large number, this gives a contradiction.

    Let us next consider the case that $\epsilon^{-1}+1 \leq k \leq M-2$. By the definition of $\Lambda_{k}$ and $x \in \Lambda_k$, we have
    \begin{equation}\label{24.01.18.315}
        A^{M-k-1} \sum_{\gamma}\|f_{\gamma}\|_{\infty}^2 > g_{k+1}(x), \;\;\;\; A^{M-k}\sum_{\gamma}\|f_{\gamma}\|_{\infty}^2 \leq g_k(x).
    \end{equation}
    By following the arguments for the case $k=M-1$, we have
    \begin{equation}
        g_{k}(x) \leq C g_{k+1}(x) < CA^{M-k-1} \sum_{\gamma}\|f_{\gamma}\|_{\infty}^2.
    \end{equation}
    By the second inequality of \eqref{24.01.18.315}, we have
    \begin{equation}
        A^{M-k} <CA^{M-k-1}.
    \end{equation}
This gives a contradiction as $A$ is sufficiently large. The case that $0 \leq k \leq \epsilon^{-1}$ can be dealt with identically. We omit the details.
\end{proof}

A key property of the function $g_k^h$ is that the Fourier supports of $\{|f_{\gamma_k}|^2 * \hat{\rho}_{>R_{k+1}^{-1}} \}_k$ are essentially disjoint. Combined with the $L^2$ orthogonality we have the high lemma. To simplify the notation, we write $x=(x',x_3) \in \mathbb{R}^2 \times \mathbb{R}$.

\begin{lemma}[High lemma]\label{24.01.15.lem35}

Let  $x=(x',x_3) \in \R^2 \times \R$. For $k=0,1,\ldots,\epsilon^{-1}$,
    \begin{equation*}
        \begin{split}
    &\int_{B^{(2)}_{(R_{k+1})^2}(x')}|g_{k}^h(y',x_3)|^2\,dy' \\&\lesssim R^{O(\epsilon)} \int_{\R^2} \Big\| \sum_{\gamma_{k} \in \mP_{R_k^{-1}} }
        \big||f_{\gamma_{k}}|^2 * \widehat{\rho}_{>R_{k+1}^{-1}} \big|^2\Big\|_{L^1(w_{B_{R_k}(y',x_3) }^{\#} )} w_{B_{(R_{k+1})^2}^{(2)}(x') }(y') \,dy'.
    \end{split}
    \end{equation*}
For $k=\epsilon^{-1}+1,\ldots,M-1$,
\begin{equation*}
     \begin{split}
    &\int_{B^{(2)}_{(R_{k+1})^2}(x')}|g_{k}^h(y',x_3)|^2\,dy' \\&\lesssim R^{O(\epsilon)} \int_{\R^2} \Big\| \sum_{\gamma_{k} \in \mP_{R_k^{-1}} }
        \big||f_{\gamma_{k}}|^2 * \widehat{\rho}_{>R_{k+1}^{-1}} \big|^2\Big\|_{L^1(w_{B_{R_k}^{(2)}(y') }^{\#} \times  w_{B_{R^{1+\epsilon}}^{(1)}(x_3) }^{\#}  )} w_{B_{(R_{k+1})^2}^{(2)}(x') }(y') \,dy'.
    \end{split}
\end{equation*}
\end{lemma}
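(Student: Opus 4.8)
The plan is to exploit the key structural fact stated just before the lemma: the functions $\{|f_{\gamma_k}|^2 * \widehat{\rho}_{>R_{k+1}^{-1}}\}_{\gamma_k \in \mP_{R_k^{-1}}}$ have essentially disjoint Fourier supports in the $(\xi_1,\xi_2)$-variables, so that a Plancherel/$L^2$-orthogonality argument gives the desired square-function bound. Concretely, I would first localize: fix $x_3$ and view everything as a function of the two-dimensional variable. The quantity $g_k^h(y',x_3)$ is an $L^1$-average over a ball $B_{R_k}$ (or $B_{R_k}^{(2)}\times B_{R^{1+\epsilon}}^{(1)}$ in the second regime) of $\sum_{\gamma_k}|f_{\gamma_k}|^2 * \widehat{\rho}_{>R_{k+1}^{-1}}$. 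The first step is to peel off the weighted average: using that $w^{\#}_{B_{R_k}}$ is an $L^1$-normalized rapidly decaying weight, I would write $g_k^h(y',x_3)$ as (essentially) a value of $\big(\sum_{\gamma_k}|f_{\gamma_k}|^2 * \widehat{\rho}_{>R_{k+1}^{-1}}\big) * w^{\#}_{B_{R_k}}$, and then absorb the outer convolution: since $R_k \le R_{k+1}^2$, convolving with $w^{\#}_{B_{R_k}}$ and then restricting to $B_{(R_{k+1})^2}^{(2)}(x')$ is compatible, up to passing from sharp cutoffs to the weights $w_{B_{(R_{k+1})^2}^{(2)}(x')}$ on the right-hand side and the $R^{O(\epsilon)}$ loss.

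The heart of the argument is the Fourier-orthogonality step. Each $|f_{\gamma_k}|^2$ has Fourier support (in $(\xi_1,\xi_2)$) in a $\sim R_k^{-1}\times R_k^{-2}$ box determined by $\gamma_k$; convolving with $\widehat{\rho}_{>R_{k+1}^{-1}}$ removes the frequencies in the $[-R_{k+1}^{-1},R_{k+1}^{-1}]^2$ neighborhood of the origin. After this high-pass filter, the remaining pieces $|f_{\gamma_k}|^2 * \widehat{\rho}_{>R_{k+1}^{-1}}$ have Fourier supports that are boundedly overlapping: the relevant frequency differences $\xi_1(\gamma_k)-\xi_1(\gamma_k')$ range over multiples of $R_k^{-1}$ and are separated at scale $R_{k+1}^{-1}$, so only $O(1)$ (or $R^{O(\epsilon)}$) of them can coincide at any given frequency. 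Thus by Plancherel on $\R^2$, $\int_{\R^2}\big|\sum_{\gamma_k} |f_{\gamma_k}|^2 * \widehat{\rho}_{>R_{k+1}^{-1}}\big|^2 \lesssim R^{O(\epsilon)} \int_{\R^2}\sum_{\gamma_k}\big||f_{\gamma_k}|^2 * \widehat{\rho}_{>R_{k+1}^{-1}}\big|^2$, and the same holds after inserting the slowly-varying weight $w_{B_{(R_{k+1})^2}^{(2)}(x')}$ (which is Fourier-supported at scale $R_{k+1}^{-2}$, much finer than the separation, hence harmless). Finally I would reinstate the $L^1(w^{\#}_{B_{R_k}})$-average on the right: since the weighted $L^1$ norm dominates pointwise values up to the rapidly-decaying tails, one bounds $\sum_{\gamma_k}\big||f_{\gamma_k}|^2 * \widehat{\rho}_{>R_{k+1}^{-1}}\big|^2(y')$ by its local average $\big\|\sum_{\gamma_k}\big||f_{\gamma_k}|^2 * \widehat{\rho}_{>R_{k+1}^{-1}}\big|^2\big\|_{L^1(w^{\#}_{B_{R_k}(y',x_3)})}$ and integrates against $w_{B_{(R_{k+1})^2}^{(2)}(x')}$, yielding exactly the claimed right-hand side. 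The case $k \ge \epsilon^{-1}+1$ is identical, with the $B_{R_k}$-ball replaced by the product weight $w^{\#}_{B_{R_k}^{(2)}}\times w^{\#}_{B_{R^{1+\epsilon}}^{(1)}}$; note this product form is preserved throughout because the convolutions and the high-pass filter act only on $(y_1,y_2)$, leaving the $x_3$-weight untouched.

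The main obstacle I anticipate is not the orthogonality itself but the careful bookkeeping of weights: converting between sharp cutoffs $B_{(R_{k+1})^2}^{(2)}$ and the smooth weights $w_{B_{(R_{k+1})^2}^{(2)}}$, commuting the convolution $* w^{\#}_{B_{R_k}}$ past the localization (which requires $R_k \le (R_{k+1})^2$, i.e. $k\epsilon \le 2(k+1)\epsilon$, always true), and controlling the cross terms in $|\sum_{\gamma_k}\cdots|^2$ so that the $R^{O(\epsilon)}$ overlap factor genuinely suffices rather than a power of the number of caps. One should also verify that the high-pass cutoff $\widehat{\rho}_{>R_{k+1}^{-1}}$ really does create the claimed frequency separation among distinct $\gamma_k$'s — this uses that two boxes associated to $\gamma_k \ne \gamma_k'$ differ in the $\xi_1$-frequency by at least $R_k^{-1}$, and after removing the central $R_{k+1}^{-1}$-ball the relevant supports become disjoint except for an $R^{O(\epsilon)}$-bounded multiplicity. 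These are all routine in the decoupling literature (this is the standard "high lemma" mechanism, cf. Corollary 1 and Lemma 4 of \cite{guth2022small}), so I expect the proof to follow the template there with the two-variable convolution modification already flagged in the text.
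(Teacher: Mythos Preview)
Your proposal is correct and matches the paper's approach: the core step is Plancherel in the $y'$-variables combined with the bounded overlap of the Fourier supports of $\{|f_{\gamma_k}|^2 * \widehat{\rho}_{>R_{k+1}^{-1}}\}_{\gamma_k}$. The paper's write-up is slightly cleaner in that it applies H\"older's (Cauchy--Schwarz) inequality at the very start to push the square inside the $w^{\#}$-average, then uses a change of variables and Fubini to make the $y'$-integral innermost before applying Plancherel and unwinding --- this avoids your ``peel off, then reinstate via Bernstein'' bookkeeping, but the substance is identical.
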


\begin{proof}
Let us first consider the case that $k=0,1,\ldots,\epsilon^{-1}$.
    By H\"{o}lder's inequality,
    \begin{equation}
    \begin{split}
        &\int_{B_{(R_{k+1})^2}^{(2)}(x')}|g_{k}^h(y',x_3)|^2\,dy' \\& \lesssim
        \int_{B_{(R_{k+1})^2}^{(2)}(x')} \int_{\R^3}
        \Big|
        \sum_{\gamma_{k}}|f_{\gamma_{k}}|^2 * \widehat{\rho}_{>R_{k+1}^{-1}}(z) \Big|^2
        w^{\#}_{B_{R_{k} }(y',x_3)}(z)\,dz\,dy'.
        \end{split}
    \end{equation}
By a change of variables on $z$, this can be rewritten as 
\begin{equation*}
    \int_{B_{(R_{k+1})^2}^{(2)}(x')} \int_{\R^3}
        \Big|
        \sum_{\gamma_{k}}|f_{\gamma_{k}}|^2 * \widehat{\rho}_{>R_{k+1}^{-1}}((y',x_3)-z) \Big|^2
        w^{\#}_{B_{R_{k} }(0)}(z)\,dz \, dy'.
\end{equation*}
By Fubini's theorem, this is equal to
\begin{equation*}
     \int_{\R^3} \Big(\int_{B_{(R_{k+1})^2}^{(2)}(x')}
        \Big|
        \sum_{\gamma_{k}}|f_{\gamma_{k}}|^2 * \widehat{\rho}_{>R_{k+1}^{-1}}((y',x_3)-z) \Big|^2 \, dy' \Big)
        w^{\#}_{B_{R_{k} }(0)}(z)\,dz.
\end{equation*}
By Plancherel's theorem on the variables $y'$, and disjointness of Fourier supports, this is bounded by
\begin{equation*}
\begin{split}
\lesssim R^{O(\epsilon)}
    \int_{\R^3} \int_{\mathbb{R}^2}
        \sum_{\gamma_{k}}
        \Big|
        |f_{\gamma_{k}}|^2 * \widehat{\rho}_{>R_{k+1}^{-1}}((y',x_3)-z) \Big|^2
        w_{B_{(R_{k+1})^2}^{(2)}(x') }(y') \, dy' 
        \,w^{\#}_{B_{R_{k} }(0)}(z)\,dz.
    \end{split}
\end{equation*}
By Fubini's theorem and a change of variables on $z$, this is equal to
\begin{equation*}
    \int_{\R^2} \int_{\mathbb{R}^3}
        \sum_{\gamma_{k}}
        \Big|
        |f_{\gamma_{k}}|^2 * \widehat{\rho}_{>R_{k+1}^{-1}}(z) \Big|^2  
        \,w^{\#}_{B_{R_{k} }(0)}((y',x_3)-z)\,dz w_{B_{(R_{k+1})^2}^{(2)}(x') }(y') \, dy'.
\end{equation*}
This can be rewritten as
\begin{equation*}
\int_{\R^2} \Big\| \sum_{\gamma_{k} \in \mP_{R_k^{-1}} }
        \big||f_{\gamma_{k}}|^2 * \widehat{\rho}_{>R_{k+1}^{-1}} \big|^2\Big\|_{L^1(w_{B_{R_k}(y',x_3) }^{\#} )} w_{B_{(R_{k+1})^2}^{(2)}(x') }(y') \,dy'.
    \end{equation*}
This completes the proof for the case $k=0,\ldots,\epsilon^{-1}$. The case $k=\epsilon^{-1}+1,\ldots,M-1$ can be proved identically. We omit the proof here.
\end{proof}

We combine Lemma \ref{24.01.15.38} and \ref{24.01.15.lem35}, and obtain the following.

\begin{corollary}\label{24.01.15.cor36} For $k=0,1,\ldots,\epsilon^{-1}$, we have
    \begin{equation}\label{24.03.02}
        \sum_{B_{R_k}: B_{R_k} \cap \Lambda_k \neq \emptyset }\int_{B_{R_k}}|g_{k}|^2 \lesssim \int_{\R^3} \sum_{\gamma_{k}}|f_{\gamma_{k}}|^4.
    \end{equation}
    Similarly, for $k=\epsilon^{-1}+1,\ldots,M-1$, we have
    \begin{equation}\label{24.03.26}
    \sum_{(B_{R_{k}}^{(2)} \times B_{R^{1+\epsilon}}^{(1)} ) \cap \Lambda_k \neq \emptyset}  \int_{B_{R_{k}}^{(2)} \times B_{R^{1+\epsilon}}^{(1)} }  |g_k|^2 \lesssim \int_{\R^3} \sum_{\gamma_{k}}|f_{\gamma_{k}}|^4.
\end{equation}
\end{corollary}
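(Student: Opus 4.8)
The proof of Corollary \ref{24.01.15.cor36} is a short bookkeeping argument combining the two preceding lemmas, so let me sketch it.

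\medskip

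\textbf{Plan.} I will prove \eqref{24.03.02}; the proof of \eqref{24.03.26} is identical with the obvious modification of the weights (replacing $w_{B_{R_k}}^{\#}$ by $w_{B_{R_k}^{(2)}}^{\#}\times w_{B_{R^{1+\epsilon}}^{(1)}}^{\#}$ everywhere), so I will only remark on it at the end. Fix $k \in \{0,1,\ldots,\epsilon^{-1}\}$ and let $B_{R_k}$ run over a finitely overlapping cover of $\R^3$ by balls of radius $R_k$. For each such $B_{R_k}$ meeting $\Lambda_k$, pick a point $x_{B_{R_k}} \in B_{R_k}\cap\Lambda_k$. On the ball $B_{R_k}$ the weight $w^{\#}_{B_{R_k}(x)}$ is comparable to $w^{\#}_{B_{R_k}(x_{B_{R_k}})}$ up to constants, so $g_k(x)\lesssim g_k(x_{B_{R_k}})$ for every $x\in B_{R_k}$; hence
\begin{equation*}
    \int_{B_{R_k}}|g_k|^2 \lesssim R_k^{3}\, |g_k(x_{B_{R_k}})|^2.
\end{equation*}
Since $x_{B_{R_k}}\in\Lambda_k$, Lemma \ref{24.01.15.38} gives $g_k(x_{B_{R_k}})\lesssim g_k^h(x_{B_{R_k}})$, and the same weight-comparability argument runs backwards: $g_k^h(x_{B_{R_k}})\lesssim g_k^h(y',x_3)$ for all $(y',x_3)$ with $y'$ in the ball $B^{(2)}_{(R_{k+1})^2}(x'_{B_{R_k}})$ and $x_3$ the third coordinate of $x_{B_{R_k}}$ — here I use that $(R_{k+1})^2 = R^{2(k+1)\epsilon}$ is, up to $R^{O(\epsilon)}$, comparable to $R_k$, so that $B^{(2)}_{(R_{k+1})^2}$ is a ball on which the $R_k$-weight is roughly constant. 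Thus
\begin{equation*}
    |g_k^h(x_{B_{R_k}})|^2 \lesssim R^{-O(\epsilon)} (R_{k+1})^{-4}\int_{B^{(2)}_{(R_{k+1})^2}(x'_{B_{R_k}})}|g_k^h(y',x_3)|^2\,dy'.
\end{equation*}

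\medskip

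\textbf{Applying the High Lemma and summing.} Now I feed this into the High Lemma (Lemma \ref{24.01.15.lem35}) with the base point $x'=x'_{B_{R_k}}$, $x_3 = $ third coordinate of $x_{B_{R_k}}$. This bounds the right-hand side by
\begin{equation*}
    R^{O(\epsilon)}\int_{\R^2}\Big\|\sum_{\gamma_k}\big||f_{\gamma_k}|^2*\widehat{\rho}_{>R_{k+1}^{-1}}\big|^2\Big\|_{L^1(w^{\#}_{B_{R_k}(y',x_3)})}\,w_{B^{(2)}_{(R_{k+1})^2}(x'_{B_{R_k}})}(y')\,dy'.
\end{equation*}
Combining the displays, the total contribution of a single $B_{R_k}$ is at most $R^{O(\epsilon)}$ times $R_k^{3}(R_{k+1})^{-4}$ times this weighted integral; using $R_k^3(R_{k+1})^{-4}=R^{-2(k+1)\epsilon-k\epsilon}\le R^{O(\epsilon)}R_k^{-1}\cdot R_k^{-1}\cdots$ — more precisely $R_k^{3}(R_{k+1})^{-4}\lesssim R^{O(\epsilon)}(R_{k+1})^{-3}$, and the missing power of $(R_{k+1})^{-3}$ is exactly the $L^1$-normalization constant hidden in converting $w^{\#}_{B_{R_k}}$ (which is $R_k^{-3}$-normalized and $R_k\sim R^{O(\epsilon)}R_{k+1}$) so that everything is $L^1$-normalized. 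The point is that after this bookkeeping each term is $\lesssim R^{O(\epsilon)}$ times an honestly $L^1$-normalized local average of $\sum_{\gamma_k}\big||f_{\gamma_k}|^2*\widehat{\rho}_{>R_{k+1}^{-1}}\big|^2$. Finally I sum over the finitely overlapping family $\{B_{R_k}: B_{R_k}\cap\Lambda_k\neq\emptyset\}$: the weights $w_{B^{(2)}_{(R_{k+1})^2}(x'_{B_{R_k}})}$ have bounded overlap (as the centers are $R_k$-separated and $(R_{k+1})^2\lesssim R^{O(\epsilon)}R_k$), so the sum of the weighted integrals is $\lesssim R^{O(\epsilon)}$ times one global integral $\int_{\R^3}\sum_{\gamma_k}\big||f_{\gamma_k}|^2*\widehat{\rho}_{>R_{k+1}^{-1}}\big|^2$. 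Since convolution with $\widehat{\rho}_{>R_{k+1}^{-1}}$ is bounded on $L^2$ (indeed $\|\widehat{\rho}_{\le R_{k+1}^{-1}}\|_{L^1}\sim 1$ gives $L^2$-boundedness of both the low and high parts), this last integral is $\lesssim\int_{\R^3}\sum_{\gamma_k}|f_{\gamma_k}|^4$, which is \eqref{24.03.02}. The case $k=\epsilon^{-1}+1,\ldots,M-1$ of \eqref{24.03.26} is proved the same way, using the second displays of Lemmas \ref{24.01.15.38} and \ref{24.01.15.lem35} and integrating over $B_{R_k}^{(2)}\times B_{R^{1+\epsilon}}^{(1)}$ rather than $B_{R_k}$.

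\medskip

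\textbf{Main obstacle.} The only delicate point is matching the normalizations: one must track how the $L^1$-normalized weights $w^{\#}$ interact with the (non-normalized) weights $w$ produced by the High Lemma and with the volume factors $R_k^3$, $(R_{k+1})^6$, and verify that all the accumulated powers of $R$ collapse to a harmless $R^{O(\epsilon)}$ after using $R_{k+1}=R^\epsilon R_k$ and $k\le M=\beta/\epsilon$. The geometric input — comparability of the two weights on the relevant balls, bounded overlap of the cover, and $L^2$-boundedness of the high-frequency projection — is entirely routine, which is why this is stated as a corollary.
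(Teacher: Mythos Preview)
There is a genuine gap. You assert that $(R_{k+1})^2$ is comparable to $R_k$ up to $R^{O(\epsilon)}$, but
\[
\frac{(R_{k+1})^2}{R_k}=\frac{R^{2(k+1)\epsilon}}{R^{k\epsilon}}=R^{(k+2)\epsilon},
\]
which for $k$ near $\epsilon^{-1}$ is of size $R^{1+O(\epsilon)}$, not $R^{O(\epsilon)}$. Hence the $R_k$-scale weight $w^{\#}_{B_{R_k}}$ is \emph{not} roughly constant on the much larger ball $B^{(2)}_{(R_{k+1})^2}$, so the pointwise inequality $g_k^h(x_{B_{R_k}})\lesssim g_k^h(y',x_3)$ for all $y'\in B^{(2)}_{(R_{k+1})^2}(x'_{B_{R_k}})$ fails in general, and so does your bounded-overlap claim for the family $\{w_{B^{(2)}_{(R_{k+1})^2}(x'_{B_{R_k}})}\}$ (the overlap count is $\sim R^{2(k+2)\epsilon}$ in 2D). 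A secondary issue is that you freeze a different third coordinate $x_3$ for each ball $B_{R_k}$, so even with bounded overlap, summing 2D integrals on varying slices would not directly yield a 3D integral.

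The paper bypasses this by never comparing $g_k^h$ pointwise at the large scale. Your first step is right: on each $B_{R_k}$ meeting $\Lambda_k$ one has $g_k(x)\lesssim g_k(x_0)\lesssim g_k^h(x_0)\lesssim g_k^h(x)$ for all $x\in B_{R_k}$, giving $\int_{B_{R_k}}|g_k|^2\lesssim\int_{B_{R_k}}|g_k^h|^2$. Summing produces the honest 3D integral $\int_{N_{(R_{k+1})^2}(\Lambda_k)}|g_k^h|^2$. Now cover this set by a boundedly overlapping family of 2D balls $B^{(2)}_{(R_{k+1})^2}$ in the $(x_1,x_2)$-variables, apply the High Lemma on each such ball for every fixed $x_3$, and integrate in $x_3$; the sum over the cover disappears via $\sum_B w_B\lesssim 1$, and integrating out $(y',x_3)$ against $w^{\#}_{B_{R_k}(y',x_3)}$ leaves $\int_{\R^3}\sum_{\gamma_k}\big||f_{\gamma_k}|^2*\widehat\rho_{>R_{k+1}^{-1}}\big|^2$. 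Your final step (Young's inequality with $\|\widehat\rho_{\le R_{k+1}^{-1}}\|_{L^1}\sim 1$) then finishes as written.
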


\begin{proof}
Let us prove \eqref{24.03.02}. 
    Suppose that $B_{R_k} \cap \Lambda_k \neq \emptyset$. Take $x_0 \in B_{R_k} \cap \Lambda_k$. For any $x \in B_{R_k}$, by a property of $w^{\#}_{B_{R_k}}$, we have
    \begin{equation}
        g_k(x) \lesssim g_k(x_0) \lesssim g_k^h(x_0) \lesssim g_k^h(x).
    \end{equation}
    The second inequality follows by Lemma \ref{24.01.15.38}. So we have
    \begin{equation}
    \begin{split}
        \sum_{B_{R_k}: B_{R_k} \cap \Lambda_k \neq \emptyset }\int_{B_{R_k}}|g_{k}(x)|^2\,dx &\lesssim \int_{N_{(R_k)^2}(\Lambda_k)} |g_k^h(x)|^2\,dx
        \\& \lesssim \int_{N_{(R_{k+1})^2}(\Lambda_k)} |g_k^h(x)|^2\,dx.
    \end{split}
    \end{equation}
By applying Lemma \ref{24.01.15.lem35} to a ball of radius $(R_{k+1})^2$, this is further bounded by
\begin{equation*}
\sum_{B_{R_{k+1}}^{(2)} }
       \int_{\R^3} \Big\| \sum_{\gamma_{k} \in \mP_{R_k^{-1}} }
        \big||f_{\gamma_{k}}|^2 * \widehat{\rho}_{>R_{k+1}^{-1}} \big|^2\Big\|_{L^1(w_{B_{R_{k}}(y',x_3) }^{\#} )} w_{B_{R_{k+1}}^{(2)}} (y')   \,dy'dx_3,
    \end{equation*}
    By the inequality $\sum_{B} w_{B} \lesssim 1$, we have
    \begin{equation*}
       \int_{\R^3} \int_{\R^3} \sum_{\gamma_{k} \in \mP_{R_k^{-1}} }
        \big||f_{\gamma_{k}}|^2 * \widehat{\rho}_{>R_{k+1}^{-1}} (z)\big|^2(w_{B_{R_k}(y',x_3) }^{\#}(z) )   \, dz \, dy' dx_3.
    \end{equation*}
By calculating the integral over $dy'dx_3$ first, this is bounded by
\begin{equation*}
       \int_{\R^3} \sum_{\gamma_{k} \in \mP_{R_k^{-1}} }
        \big||f_{\gamma_{k}}|^2 * \widehat{\rho}_{>R_{k+1}^{-1}} (z)\big|^2\,dz.
    \end{equation*}
By \eqref{07.29.310}, we have
\begin{equation}
    \big||f_{\gamma_{k}}|^2 * \widehat{\rho}_{>R_{k+1}^{-1}}(z) \big|^2 \lesssim |f_{\gamma_{k}}(z)|^4 + \big||f_{\gamma_{k}}|^2 * \widehat{\rho}_{ \leq R_{k+1}^{-1}}(z) \big|^2.
\end{equation}
Since $\|\widehat{\rho}_{ \leq R_{k+1}^{-1}} \|_{L^1} \lesssim 1$, by Young's inequality, this is bounded by
    \begin{equation}
        \int_{\R^3} \sum_{\gamma_{k} \in \mP_{R_k^{-1}} } |f_{\gamma_k}|^4.
    \end{equation}
    This completes the proof. \eqref{24.03.26} can be proved identically, and we omit the proof.
\end{proof}

We finish this subsection by stating the Bernstein's inequality.

\begin{lemma}[cf. Lemma 2 of \cite{guth2022small}]\label{24.01.15.lem33}
Let $f$ be a function whose Fourier support is in an ellipsoid $A \subset \mathbb{R}^3$. Then for any dual set $A^*$, we have
    \begin{equation}
        \|f\|_{L^{\infty}(A^*)} \lesssim \|f\|_{L^1(w^{\#}_{A^*})}.
    \end{equation}
\end{lemma}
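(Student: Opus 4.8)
The plan is the standard reproducing–kernel argument behind Bernstein-type estimates, carried out with the anisotropic geometry of $A$. First I would normalize the geometry: write $A=\xi_0+L(B(0,1))$ for a linear map $L$, and replace $f$ by the modulation $e(-x\cdot\xi_0)f$ (which changes neither $|f|$ nor the hypothesis), so that I may assume $A=L(B(0,1))$ is centered at the origin. Then the dual set is $A^*=a+A^*_0$ (up to the usual harmless constants), where $A^*_0:=(L^{\mathsf T})^{-1}(B(0,1))=\{x:|L^{\mathsf T}x|\le 1\}$ and $|A^*|\sim|A^*_0|\sim|\det L|^{-1}\sim|A|^{-1}$. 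Next, fix $\psi\in\mathcal{S}(\R^3)$ with $\psi\equiv 1$ on $B(0,1)$ and $\supp\psi\subset B(0,2)$, put $\Psi(\xi):=\psi(L^{-1}\xi)$ and let $K:=\widecheck{\Psi}$; since $\Psi\equiv 1$ on $A\supseteq\supp\widehat f$, this gives the reproducing identity $f=f*K$.

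The key ingredient is the pointwise decay of the kernel $K$. By the change of variables $\eta=L^{-1}\xi$,
\[
  |K(x)|=|\det L|\,\big|\widecheck{\psi}(L^{\mathsf T}x)\big|\lesssim_N |\det L|\,\big(1+|L^{\mathsf T}x|\big)^{-N}\sim_N \frac{1}{|A^*|}\big(1+|L^{\mathsf T}x|\big)^{-N}
\]
for every $N$, using the rapid decay of the Schwartz function $\widecheck\psi$. Taking $N=1000$ and recalling that $A^*_0=\{|L^{\mathsf T}x|\le 1\}$, this is precisely the statement that $|K(x)|\lesssim w^{\#}_{A^*_0}(x)$, the $L^1$-normalized weight adapted to $A^*_0$ centered at the origin (the natural anisotropic analogue of $w^{\#}_{B_R(a)}$ from the Notations, with $\|\cdot\|_{A^*_0}\sim|L^{\mathsf T}\cdot|$ in place of $|\cdot|/R$).

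Finally I would run the convolution bound and absorb the shift of centers. For any $x\in A^*=a+A^*_0$,
\[
  |f(x)|\le\int_{\R^3}|f(x-y)|\,|K(y)|\,dy\lesssim\int_{\R^3}|f(z)|\,w^{\#}_{A^*_0}(x-z)\,dz .
\]
Since $x-a\in A^*_0$, the Minkowski functional of $A^*_0$ obeys $\|x-a\|_{A^*_0}\lesssim 1$, so by the quasi-triangle inequality $1+\|z-a\|_{A^*_0}\lesssim 1+\|z-x\|_{A^*_0}$, and hence $w^{\#}_{A^*_0}(x-z)\lesssim w^{\#}_{A^*}(z)$ (here I use the large decay exponent $1000$, of which there is plenty to spare, together with $|A^*|\sim|A^*_0|$ and the comparability of $A^*$ with $a+A^*_0$). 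Substituting this and taking the supremum over $x\in A^*$ yields $\|f\|_{L^\infty(A^*)}\lesssim\int_{\R^3}|f|\,w^{\#}_{A^*}=\|f\|_{L^1(w^{\#}_{A^*})}$, which is the claim. The only point requiring a little care is the anisotropic bookkeeping — that $L^{\mathsf T}$ is exactly the map intertwining the Euclidean weight on $B(0,1)$ with the weight on $A^*_0$ — together with the harmless translation by the center $a$; I do not expect a genuine obstacle here, this being a routine if notation-heavy computation.
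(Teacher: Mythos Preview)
Your argument is correct and is precisely the standard reproducing-kernel proof of this Bernstein-type estimate. The paper itself does not supply a proof of this lemma, merely citing \cite{guth2022small}; what you have written is the routine argument one finds in such references, so there is nothing to compare.
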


\subsection{Multilinear restriction/Kakeya inequalities}

We say $I_1,I_2,I_3 \subset [0,1]$ are $C$-transverse if $\mathrm{dist}(I_i,I_j)>C$ for any $i \neq j$. The following theorem is a multlilinear restrictiton estimate for the moment curve. The proof is a simple application of Plancherel's theorem (for example, see  \cite[Lemma 2.5]{MR3161099}).

\begin{proposition}[Multilinear restriction estimate]\label{24.01.19.prop37}

Let $C>0$ and $\beta \geq 1$.
Let $I_1,I_2,I_3 \subset [0,1] $ be $C$-transverse. Then
\begin{equation}
    \Big\| \avprod_{i=1}^3 f_i \Big\|_{L_{\#}^6(B_R)} \lesssim_C \avprod_{i=1}^3\| f_i \|_{L^2(w_{B_R}^{\#})}
\end{equation}
    for any functions $f_i$ whose Fourier support is in $\mathcal{M}^3(R^{\beta},R) \cap (I_i \times \mathbb{R}^2)$.
\end{proposition}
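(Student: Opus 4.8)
The plan is to prove the multilinear restriction estimate by a direct $L^2$ argument, exploiting that the product $f_1 f_2 f_3$ (or rather its cube, balanced appropriately) has Fourier support controlled by transversality. The key point is the observation, going back to the classical multilinear restriction theory for curves (see \cite[Lemma 2.5]{MR3161099}), that for $C$-transverse intervals $I_1, I_2, I_3 \subset [0,1]$, the map sending $(t_1, t_2, t_3) \in I_1 \times I_2 \times I_3$ to the sum $(t_1,t_1^2,t_1^3) + (t_2,t_2^2,t_2^3) + (t_3,t_3^2,t_3^3)$ of points on the moment curve is a diffeomorphism onto its image with Jacobian bounded below by a power of $C$; this is precisely the statement that the Vandermonde-type determinant $\prod_{i<j}(t_i - t_j)$ is bounded away from zero. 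This makes the arithmetic of triple sums of frequencies essentially rigid, which is what converts the trilinear product into a manageable $L^2$ object.

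Concretely, first I would use a Fourier-analytic reduction: since $\|\avprod_i f_i\|_{L^6_\#(B_R)}^6 \sim \fint_{B_R} |f_1 f_2 f_3|^2$, and $f_1 f_2 f_3$ has Fourier transform equal to the triple convolution $\widehat{f_1} * \widehat{f_2} * \widehat{f_3}$, supported in $\mathcal{M}^3(R^\beta, R) + \mathcal{M}^3(R^\beta,R) + \mathcal{M}^3(R^\beta,R)$ restricted to the transverse pieces, I would decompose each $f_i$ over the caps $\gamma \in \mathcal{P}_{R^{-\beta}}$ (or equivalently over $R^{-1}$-arcs of the curve) and expand $|f_1f_2f_3|^2 = (f_1f_2f_3)\overline{(f_1f_2f_3)}$. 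Integrating against the $L^1$-normalized weight $w_{B_R}^\#$, whose Fourier transform is essentially supported in a dual box of dimensions $\sim R^{-\beta} \times R^{-\beta} \times R^{-1}$ (or a comparable neighborhood of the origin), one gets a sum over tuples of caps $(\gamma_1,\gamma_2,\gamma_3,\gamma_1',\gamma_2',\gamma_3')$ that survive only when the frequency difference lies in this dual box. The transversality/nondegenerate-Jacobian fact then forces $\gamma_i$ and $\gamma_i'$ to be within $O(1)$ of each other for each $i$ (i.e. the arithmetic constraint is nearly diagonal), which collapses the six-fold sum to an essentially diagonal sum.

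Having reduced to the diagonal, I would apply Cauchy–Schwarz (or Plancherel directly in the appropriate variables) to bound the resulting expression by $\fint_{B_R} |f_1|^2 |f_2|^2 |f_3|^2$ up to acceptable constants, and then by one more application of the transversality of Fourier supports together with Hölder and the definition of the $L^1(w_{B_R}^\#)$ norms, arrive at $\big(\avprod_i \|f_i\|_{L^2(w_{B_R}^\#)}\big)^6$. The constant depends on $C$ only through the lower bound on the Jacobian, i.e. through a fixed power of $C$, as claimed. One technical care point is the passage from sharp cutoffs to Schwartz weights: the Fourier support of $f_i$ lies in the slab $\mathcal{M}^3(R^\beta,R)$ which has thickness $R^{-1}$ in the third coordinate and $R^{-2\beta}$ in the second, so the relevant ``dual box'' used in the weight argument must be chosen compatibly, and one uses rapidly decaying tails to absorb contributions from caps that are far apart.

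The step I expect to be the main obstacle is making the transversality estimate quantitative and uniform in the scale parameters, i.e. verifying that the near-diagonalization of the six-fold cap sum genuinely costs only $O_C(1)$ and not a power of $R$. This amounts to checking that if $(t_1,t_1^2,t_1^3)+(t_2,t_2^2,t_2^3)+(t_3,t_3^2,t_3^3)$ and $(s_1,s_1^2,s_1^3)+(s_2,s_2^2,s_2^3)+(s_3,s_3^2,s_3^3)$ differ by a vector of size $\lesssim (R^{-\beta},R^{-2\beta},R^{-1})$ with $t_i, s_i \in I_i$ and the $I_i$ being $C$-separated, then $|t_i - s_i| \lesssim_C R^{-\beta}$ for each $i$ — which follows from inverting the map with the Jacobian bound, but requires being careful that the bound is applied on a region where the map really is a diffeomorphism. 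Once this is in hand the rest is the routine $L^2$ bookkeeping indicated above, and indeed the paper defers to \cite[Lemma 2.5]{MR3161099} precisely because this is the only genuinely substantive ingredient.
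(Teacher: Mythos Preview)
Your proposal is correct and follows precisely the standard Plancherel/Vandermonde-Jacobian argument that the paper defers to via \cite[Lemma 2.5]{MR3161099}; the paper offers no independent proof. One minor correction: since $B_R$ here is the isotropic cube of side $R$, the dual box for $w_{B_R}^{\#}$ has dimensions $\sim R^{-1}\times R^{-1}\times R^{-1}$ (not $R^{-\beta}\times R^{-\beta}\times R^{-1}$), so the diagonalization naturally occurs at the $R^{-1}$-arc scale you mention parenthetically rather than at the $R^{-\beta}$-cap scale.
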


We next state the bilinear Kakeya inequality. To state the inequality, let us introduce some notations.
Let $\mathbb{T}^{(2)}$ be a collection of tubes in $\R^2$ with dimension $R \times R^2$ contained in a ball of radius $10R^2$. We say $\mathbb{T}_1,\mathbb{T}_2 \subset \mathbb{T}^{(2)}$ are $\nu$-separated if for any tubes $T_1 \in \mathbb{T}_1$ and $T_2 \in \mathbb{T}_2$ 
\begin{equation}
    \angle(T_1,T_2) \geq \nu >0
\end{equation}
where $\angle(T_1,T_2)$ is the angle between
the long directions of $T_1$ and $T_2$.

\begin{lemma}[Bilinear Kakeya inequality]\label{24.02.04.prop38}

Let $\nu>0$ and $R > 1$. Suppose that $\mathbb{T}_1,\mathbb{T}_2$ are $\nu$-separated. Then
\begin{equation}
    \fint_{B_{R^2}^{(2)}} \sum_{T_1} \sum_{T_2}\chi_{T_1}\chi_{T_2} \lesssim_\nu \fint_{B_{R^2}^{(2)}} \sum_{T_1}\chi_{T_1} \fint_{B_{R^2}^{(2)}} \sum_{T_2}\chi_{T_2}.
\end{equation}

\end{lemma}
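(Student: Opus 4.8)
The plan is to reduce to a statement about tubes whose long directions are genuinely separated and then run the classical geometric argument of Córdoba, as formalized in the bilinear setting by Bennett--Carbery--Tao. First I would normalize: rescaling $B_{R^2}^{(2)}$ to the unit ball $B_1^{(2)}$, each tube $T\in\mathbb{T}^{(2)}$ becomes a tube of dimensions $R^{-1}\times 1$, i.e.\ a slab of width $R^{-1}$ around a unit line segment. The hypothesis that $\mathbb{T}_1,\mathbb{T}_2$ are $\nu$-separated says every $T_1\in\mathbb{T}_1$ and $T_2\in\mathbb{T}_2$ make an angle at least $\nu$, so the intersection $T_1\cap T_2$ is contained in a parallelogram of area $\lesssim_\nu R^{-2}$ (the two widths are $R^{-1}$ and they cross at angle $\gtrsim\nu$). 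This is the only place transversality enters.

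Next I would expand the left-hand side pointwise. Since $\chi_{T_1}\chi_{T_2}$ is the indicator of $T_1\cap T_2$, we have
\begin{equation*}
\fint_{B_1^{(2)}}\sum_{T_1}\sum_{T_2}\chi_{T_1}\chi_{T_2}
= \sum_{T_1}\sum_{T_2}|T_1\cap T_2|
\lesssim_\nu R^{-2}\,\#\{(T_1,T_2)\}
= R^{-2}\,(\#\mathbb{T}_1)(\#\mathbb{T}_2).
\end{equation*}
On the other hand each tube in the rescaled picture has area $\sim R^{-1}$, so $\fint_{B_1^{(2)}}\sum_{T_i}\chi_{T_i}\sim \fint_{B_1^{(2)}}|T_i|\cdot(\#\mathbb{T}_i)\sim R^{-1}(\#\mathbb{T}_i)$ for each $i$ (here I am using that $|B_1^{(2)}|\sim 1$). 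Hence the right-hand side is $\gtrsim R^{-2}(\#\mathbb{T}_1)(\#\mathbb{T}_2)$, matching the upper bound for the left-hand side. Undoing the rescaling (both sides are averages, hence scale-invariant under the dilation) finishes the estimate.

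The main subtlety — the "hard part," such as it is — is that $\mathbb{T}^{(2)}$ is merely a \emph{collection} of tubes, so the same spatial tube may be listed with multiplicity, and distinct tubes in $\mathbb{T}_1$ (or in $\mathbb{T}_2$) may be nearly parallel or even coincident; one cannot assume the $\mathbb{T}_i$ are separated \emph{within} themselves. The computation above is robust to this: I never used any internal separation, only the cross-separation between $\mathbb{T}_1$ and $\mathbb{T}_2$, and the bound $|T_1\cap T_2|\lesssim_\nu R^{-2}$ holds for \emph{every} pair regardless of multiplicities, while $\fint\sum_{T_i}\chi_{T_i}$ counts each listed tube with its multiplicity on both occurrences. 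So the proof is really just the two displayed lines plus the rescaling; the only thing to be careful about is tracking that "area of a tube" is $\sim R^{-1}$ after normalization and $\sim R^3$ (namely $R\cdot R^2$) before, and that the parallelogram-area bound degrades by a factor $\nu^{-1}$, which is exactly the $\lesssim_\nu$ in the statement.
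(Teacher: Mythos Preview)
Your proof is correct and essentially identical to the paper's: both reduce by bilinearity to a single pair $(T_1,T_2)$ and then compare $|T_1\cap T_2|\sim_\nu R^2$ against $|T_1||T_2|/|B_{R^2}^{(2)}|\sim R^3\cdot R^3/R^4=R^2$. Your rescaling to the unit ball is a cosmetic addition the paper skips, and your remarks about multiplicities just make explicit what the paper's ``by linearity'' encodes.
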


\begin{proof}
    By linearlity, it suffices to prove that
    \begin{equation}
    \fint_{B_{R^2}^{(2)}} \chi_{T_1}\chi_{T_2} \lesssim_\nu \fint_{B_{R^2}^{(2)}} \chi_{T_1} \fint_{B_{R^2}^{(2)}} \chi_{T_2}.
\end{equation}
By routine computations,
\begin{equation}
    \fint_{B_{R^2}^{(2)}} \chi_{T_1}\chi_{T_2} \sim R^{-4}|T_1 \cap T_2| \sim R^{-2}
\end{equation}
and
\begin{equation}
    \fint_{B_{R^2}^{(2)}} \chi_{T_i} \sim R^{-4}|T_i| \sim R^{-1}.
\end{equation}
This completes the proof.
\end{proof}

\section{Proof of Theorem \ref{23.12.31.thm12}}

In this section, we give a proof of Theorem \ref{23.12.31.thm12}. Recall that Theorem \ref{23.12.31.thm12} follows from Proposition \ref{24.01.18.prop23}. Recall the decomposition (Definition \ref{07.30.def32})
\begin{equation}
    B_{R^{2\beta}}^{(2)} \times B_R^{(1)} = \bigcup_{k=0}^{M-1}\Lambda_k.
\end{equation}
Let $p_c:=6+2/\beta$.
Since $M = \beta/\epsilon$, it suffices to prove
\begin{equation}\label{24.01.19.42}
\alpha^{p_c}|U_{\alpha,\mathrm{mul} } \cap \Lambda_{k} | \leq C_{\epsilon} R^{p_c \beta(\frac12-\frac{1}{p_c})} R^{\epsilon} \sum_{\gamma} \|f_{\gamma}\|_{L^4}^4 \big( \sup_{\gamma} \|f_{\gamma}\|_{L^{\infty}} \big)^{{p_c-4}}
    \end{equation}
for all $k=0,1,\ldots,M-1$.
\begin{lemma}\label{24.01.18.lem411}
Let $0 \leq k \leq M-1$.
Suppose that $U_{\alpha,\mathrm{mul}} \cap \Lambda_{k} \neq \emptyset$. Then
    \begin{equation}\label{24.01.18.42}
        \alpha \lesssim R^{O(\epsilon)} (R_{k})^{\frac12}(\sum_{\gamma}\|f_{\gamma}\|_{\infty}^2)^{\frac12}.
    \end{equation}
\end{lemma}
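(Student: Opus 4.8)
**Proof proposal for Lemma 4.5 (the bound $\alpha \lesssim R^{O(\epsilon)}(R_k)^{1/2}(\sum_\gamma\|f_\gamma\|_\infty^2)^{1/2}$).**

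The plan is to exploit the defining property of the high set $\Lambda_k$: at a point $x\in\Lambda_k$ one has $g_k(x)\geq A^{M-k}\sum_\gamma\|f_\gamma\|_\infty^2$ (or $g_{M-1}(x)\ge A\sum_\gamma\|f_\gamma\|_\infty^2$), so to prove \eqref{24.01.18.42} it suffices to produce an upper bound of the shape $\alpha^2 \lesssim R^{O(\epsilon)} R_k\, g_k(x)$ for a suitable $x\in U_{\alpha,\mathrm{mul}}\cap\Lambda_k$. Pick such an $x$. By the definition of $U_{\alpha,\mathrm{mul}}$ we have $\alpha^3\sim|f_1(x)f_2(x)f_3(x)|$. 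The natural tool is the multilinear restriction estimate, Proposition \ref{24.01.19.prop37}, applied on the ball $B_{R_k}(x)$ of radius $R_k$ (one must first check that the Fourier supports of the $f_i$, being contained in $\mathcal{M}^3(R^\beta,R)\subset\mathcal{M}^3(R_k,R_k)$ up to harmless enlargement at scale $R_k$, are of the right form, and that the transversality constant $K^{-1}$ beats the $C$ needed). That gives
\[
\Big(\fint_{B_{R_k}(x)}\Big|\avprod_{i=1}^3 f_i\Big|^6\Big)^{1/6}\lesssim \avprod_{i=1}^3\|f_i\|_{L^2(w^\#_{B_{R_k}(x)})}.
\]

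Next I would convert both sides into statements about the square function $g_k$. On the left, since $|f_i|$ does not oscillate faster than scale $R_k^{-1}$ in the relevant directions and $x$ is essentially a Lebesgue point at that scale, $|\avprod f_i(x)|\lesssim R^{O(\epsilon)}$ times the $L^6_\#(B_{R_k}(x))$-average, so the left side is $\gtrsim R^{-O(\epsilon)}\alpha$. On the right, for each $i$ we write $\|f_i\|_{L^2(w^\#_{B_{R_k}(x)})}^2 = \big\||f_i|^2\big\|_{L^1(w^\#_{B_{R_k}(x)})}$ and use the square-function domination $|f_i|^2 \lesssim \big(\sum_{\gamma_k\subset I_i}|f_{\gamma_k}|^2\big)\cdot(\#\gamma_k\text{'s})\lesssim R_k\sum_{\gamma_k}|f_{\gamma_k}|^2$ — here the factor $R_k$ is the number of caps $\gamma_k$ inside an interval $I_i$ of length $\sim K^{-1}$, which costs only $R^{O(\epsilon)}R_k^{1/2}$ per factor after taking square roots. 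Actually, to avoid losing the full $R_k$ per factor one should instead invoke $L^2$-orthogonality of $f_{\gamma_k}$ over the $\gamma_k$ at scale $R_k$ (the pieces $f_{\gamma_k}$ have finitely-overlapping Fourier supports), giving $\|f_i\|_{L^2(w^\#_{B_{R_k}(x)})}^2 \lesssim R^{O(\epsilon)} g_k(x)$ directly. Combining, $\alpha \lesssim R^{O(\epsilon)} g_k(x)^{1/2}$, and then the high-set inequality $g_k(x)\le R_k^{?}$ — no: we want the reverse direction, so instead bound $g_k(x)\lesssim R_k\sup_{\gamma}\|f_\gamma\|_\infty^2$ via a trivial pointwise estimate $\sum_{\gamma_k}|f_{\gamma_k}|^2\lesssim R_k\sup_\gamma\|f_\gamma\|_\infty^2$ (Bernstein at scale $R_k$, i.e. Lemma \ref{24.01.15.lem33}, to pass from $f_{\gamma_k}$ to the finer $f_\gamma$, or a direct counting argument). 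Chaining these yields \eqref{24.01.18.42}.

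The main obstacle, and the step requiring care, is the interplay of scales: the multilinear restriction estimate is most cleanly stated at the scale $R_k$ where the caps $\gamma_k$ have dimensions $\sim R_k^{-1}\times R_k^{-2}\times\max\{R_k^{-3},R^{-1}\}$, but $g_k$ is defined with the mixed weight $w^\#_{B^{(2)}_{R_k}}\times w^\#_{B^{(1)}_{R^{1+\epsilon}}}$ once $k>\epsilon^{-1}$, so the third variable lives at scale $R^{1+\epsilon}$ rather than $R_k$. I expect one must run the argument separately for $k\le\epsilon^{-1}$ (honest isotropic-in-the-last-coordinate balls $B_{R_k}$) and for $k>\epsilon^{-1}$ (anisotropic boxes), in the latter case applying Proposition \ref{24.01.19.prop37} on $B_{R^{1+\epsilon}}$ in the $x_3$-direction and $B_{R_k}$ in $(x_1,x_2)$ and absorbing the mismatch into the $R^{O(\epsilon)}$ factor. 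Keeping track of exactly which powers of $R_k$ versus $R$ appear, and verifying they collapse to the clean bound $(R_k)^{1/2}(\sum_\gamma\|f_\gamma\|_\infty^2)^{1/2}$ with only an $R^{O(\epsilon)}$ loss, is the delicate bookkeeping; the conceptual content (multilinear restriction $+$ $L^2$ orthogonality $+$ Bernstein) is routine.
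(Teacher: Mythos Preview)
Your proposal has a genuine gap: you are using the wrong half of the definition of $\Lambda_k$. The inequality $g_k(x)\ge A^{M-k}\sum_\gamma\|f_\gamma\|_\infty^2$ is a \emph{lower} bound on $g_k$, so proving $\alpha^2\lesssim R_k\,g_k(x)$ (even if you could) would not yield an upper bound on $\alpha$ in terms of $\sum_\gamma\|f_\gamma\|_\infty^2$. You notice this midway and try to salvage it with the ``trivial pointwise estimate'' $\sum_{\gamma_k}|f_{\gamma_k}|^2\lesssim R_k\sup_\gamma\|f_\gamma\|_\infty^2$, but that inequality is false: since $|f_{\gamma_k}|\le \sum_{\gamma\subset\gamma_k}|f_\gamma|\le (R^\beta/R_k)\sup_\gamma\|f_\gamma\|_\infty$, the best trivial bound is $\sum_{\gamma_k}|f_{\gamma_k}|^2\lesssim (R^{2\beta}/R_k)\sup_\gamma\|f_\gamma\|_\infty^2$, which is far too large when $R_k\ll R^\beta$.

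The piece of the definition you need is the \emph{other} one: membership in $\Lambda_k$ forces $x\notin\Lambda_{k+1}$, hence $g_{k+1}(x)<A^{M-k-1}\sum_\gamma\|f_\gamma\|_\infty^2$. With this in hand the paper's argument is entirely elementary and uses no multilinear restriction: by the triangle inequality $\alpha\lesssim\sum_{\gamma_k}|f_{\gamma_k}(x)|$, then Cauchy--Schwarz over the $\lesssim R_k$ caps gives the factor $(R_k)^{1/2}(\sum_{\gamma_k}|f_{\gamma_k}(x)|^2)^{1/2}$, and finally local constancy (Bernstein) bounds $\sum_{\gamma_k}|f_{\gamma_k}(x)|^2\lesssim R^{O(\epsilon)}g_{k+1}(x)\lesssim R^{O(\epsilon)}\sum_\gamma\|f_\gamma\|_\infty^2$. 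Your detour through Proposition~\ref{24.01.19.prop37} is unnecessary and, as you anticipated, creates scale-matching headaches (for $k>\epsilon^{-1}$ the ball $B_{R_k}$ is larger than $B_R$ in the third coordinate) that the paper's direct approach avoids entirely.
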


\begin{proof}
Suppose $0 \leq k \leq M-2$.
Let $x \in U_{\alpha,\mathrm{mul}} \cap \Lambda_{k} \neq \emptyset$. Then we have
\begin{equation}
    \alpha \lesssim |f_1(x)f_2(x)f_3(x)|^{\frac13} \lesssim \sum_{\gamma_{k}}|f_{\gamma_{k}}(x)|.
\end{equation}
Since $\# \gamma_k \lesssim R_k$,
by Cauchy-Schwarz inequality, this is bounded by
\begin{equation}
    (R_k)^{\frac12} \big(\sum_{\gamma_k}|f_{\gamma_k}(x)|^2\big)^{\frac12}.
\end{equation}
By an application of Fourier uncertainty principle, and the definition of $\Lambda_k$,
this is further bounded by 
\begin{equation}
    \lesssim (R_k)^{\frac12}g_k(x)^{\frac12} \lesssim R^{O(\epsilon)}
    (R_k)^{\frac12}g_{k+1}(x)^{\frac12} \lesssim R^{O(\epsilon)}(R_k)^{\frac12}(\sum_{\gamma}\|f_{\gamma}\|_{\infty}^2)^{\frac12}.
\end{equation}
This completes the proof. The case $k=M-1$ can be done similarly.
\end{proof}

One ingredient of the proof of Theorem \ref{23.12.31.thm12} is the $L^4$ reverse square function estimate for the parabola. As a corollary of it, we can obtain the following inequality.

\begin{lemma}\label{24.02.13.lem42}
For every function $f$ whose Fourier support is in $\mathcal{M}^3(R^{2},R)$, and for every $c \in \R$, we have
    \begin{equation}
        \|f\|_{L^4([0,R^2]^2 \times \{c \})} \leq C_p  \big\| (\sum_{\gamma \in \mP_{R^{-1}} }|f_{\gamma}|^2)^{\frac12} \big\|_{L^4(\R^2 \times \{c\}) }.
    \end{equation}
\end{lemma}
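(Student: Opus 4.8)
The plan is to reduce the claimed bound on $\R^2\times\{c\}$ to the known $L^4$ reverse square function estimate for the parabola in $\R^2$, treating the third frequency coordinate as a harmless perturbation. First I would fix $c$ and restrict attention to the function $x'\mapsto f(x',c)$, $x'\in\R^2$. Its Fourier transform (in the $x'=(x_1,x_2)$ variables) is a measure supported in the projection of $\mathcal{M}^3(R^2,R)$ to the $(\xi_1,\xi_2)$-plane, which is exactly the $R^{-2}$-neighborhood of the parabola $\{(\xi_1,\xi_1^2):\xi_1\in[0,1]\}$; the $\xi_3$ variable is integrated out and only contributes an $x_3$-dependent phase which is frozen at $x_3=c$. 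Concretely, writing $\widehat{f}(\xi)=F(\xi)\,d\xi$ for a density $F$ on $\mathcal{M}^3(R^2,R)$, one has $f(x',c)=\int e(x'\cdot\xi')\big(\int e(c\xi_3)F(\xi',\xi_3)\,d\xi_3\big)d\xi'$, so $f(\cdot,c)$ is a Schwartz function on $\R^2$ with Fourier support in the $R^{-2}$-neighborhood of the parabola.

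The caps $\gamma\in\mP_{R^{-1}}$ project to the standard decoupling caps for the parabola at scale $R^{-1}$: the $\xi_1$-interval $[lR^{-1},(l+1)R^{-1})$ together with $|\xi_2-\xi_1^2|\le R^{-2}$ is precisely an $R^{-1}\times R^{-2}$ parabola cap, and $f_\gamma(\cdot,c)$ is the Fourier projection of $f(\cdot,c)$ to that cap (again because the $\xi_3$-localization defining $\gamma$ just contributes to the integrated density). Hence the inequality to be proved becomes: for $g:=f(\cdot,c)$ with Fourier support in the $R^{-2}$-neighborhood of the parabola,
\begin{equation*}
\|g\|_{L^4([0,R^2]^2)}\lesssim \big\|\big(\sum_{\theta}|g_\theta|^2\big)^{1/2}\big\|_{L^4(\R^2)},
\end{equation*}
where $\theta$ ranges over the $R^{-1}\times R^{-2}$ caps. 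This is exactly the $L^4$ reverse (inverse) square function estimate for the parabola, which is classical: it follows from biorthogonality of the products $g_\theta\overline{g_{\theta'}}$ — the Fourier supports of $g_\theta\overline{g_{\theta'}}$ for different unordered pairs $\{\theta,\theta'\}$ have bounded overlap — so that by Plancherel $\|g\|_{L^4(\R^2)}^4=\|g^2\|_{L^2}^2\lesssim \sum_{\theta,\theta'}\|g_\theta g_{\theta'}\|_{L^2}^2 = \|\sum_\theta|g_\theta|^2\|_{L^2}^2$, and Cauchy–Schwarz converts the right side into $\|(\sum_\theta|g_\theta|^2)^{1/2}\|_{L^4}^4$. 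Dropping the spatial restriction $[0,R^2]^2$ on the left only makes the inequality easier, so the stated localized version is immediate.

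The only genuinely non-routine point is the bookkeeping in the first two paragraphs: one must check that "Fourier support in $\mathcal{M}^3(R^2,R)$" really does pass, upon slicing at $x_3=c$ and projecting out $\xi_3$, to "Fourier support in the $R^{-2}$-neighborhood of the parabola," and that the cap decomposition $\mP_{R^{-1}}$ is compatible with the parabola's $R^{-1}$-cap decomposition. Both are true essentially by the definitions — the defining inequalities for $\mathcal{M}^3(R^2,R)$ and for $\gamma$ are $|\xi_2-\xi_1^2|\le R^{-2}$ and $|\xi_3-3\xi_1\xi_2+2\xi_1^3|\le R^{-1}$, and only the former survives the projection — but it should be spelled out so that the invocation of the parabola reverse square function estimate is clean. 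I expect this is where a referee would want care, while the rest is a direct citation of the known planar estimate.
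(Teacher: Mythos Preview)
Your proposal is correct and follows exactly the route the paper indicates: the paper does not give a proof of this lemma but states just before it that the inequality is ``a corollary of'' the $L^4$ reverse square function estimate for the parabola, and you have supplied precisely those details --- slicing at $x_3=c$, observing that the $(\xi_1,\xi_2)$-projection of $\mathcal{M}^3$ and of each $\gamma\in\mP_{R^{-1}}$ are the standard parabola neighborhood and caps, and then invoking the C\'ordoba--Fefferman biorthogonality argument. One cosmetic remark: when you write ``$\|g\|_{L^4}^4=\|g^2\|_{L^2}^2$'' you should use $|g|^2$ rather than $g^2$ to match your subsequent use of the products $g_\theta\overline{g_{\theta'}}$ (whose Fourier supports lie in $\theta-\theta'$); either the sum-set or difference-set version works for the parabola, but keep the notation consistent.
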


We will estimate $|U_{\alpha,\mathrm{mul}} \cap \Lambda_{k}|$. Our proofs are different whether
\begin{equation}
    k=0, \;\;\; 1 \leq k < \epsilon^{-1}, \;\;\; \mathrm{or} \;\;\; \epsilon^{-1} \leq k \leq \beta\epsilon^{-1}.
\end{equation}
Let us start with the last case, which is the nontrivial case.

\subsection{Estimate for  \texorpdfstring{$|U_{\alpha,\mathrm{mul}} \cap \Lambda_{k}|$}{} for \texorpdfstring{$\epsilon^{-1} \leq k \leq \beta \epsilon^{-1}$}{}} We may assume that $U_{\alpha,\mathrm{mul}} \cap \Lambda_{k} \neq \emptyset$.
By the definition of $U_{\alpha,\mathrm{mul}}$, we have the following inequality.
\begin{equation}
    \alpha^6|U_{\alpha,\mathrm{mul}} \cap \Lambda_{k}| \leq
    \sum_{B_{R}: B_{R} \cap U_{\alpha,\mathrm{mul}} \cap \Lambda_k \neq \emptyset }
    \int_{B_{R}} \Big|\avprod_{i=1}^3 f_i \Big|^6.
\end{equation}
We apply Proposition \ref{24.01.19.prop37} and H\"{o}lder's inequality, and this is bounded by
\begin{equation}\label{07.30.410}
\begin{split}
    \lesssim 
    \sum_{B_{R}: B_{R} \cap U_{\alpha,\mathrm{mul}} \cap \Lambda_{k} \neq \emptyset }
    \avprod_{i=1}^3
    \int_{\mathbb{R}^3} \big( \sum_{\theta \in \mathcal{P}_{R^{-1}} : \theta \subset I_i }|f_{\theta}(x)|^2 \big)^3 w_{B_{R}}(x)\,dx.
    \end{split}
\end{equation}
Since for given $\theta \in \mP_{R^{-1}}$ the number of $\gamma_k \subset \theta$ is bounded by $R_kR^{-1}$,
we have
\begin{equation}
    \sum_{\theta \in \mathcal{P}_{R^{-1}}}|f_{\theta}(x)|^2 \lesssim R_kR^{-1} \sum_{\gamma_k }|f_{\gamma_k}(x)|^2.
\end{equation}
Let $y \in B_{R} \cap U_{\alpha,\mathrm{mul}} \cap \Lambda_{k} \neq \emptyset$. Let $B_{R^{1+\epsilon}}$ be the a $CR^{1+\epsilon}$-neighborhood of the ball $B_R$. Note that by a rapid decay of $w_{B_R}$ outside of the ball $B_{R^{1+\epsilon}}$, the main contribution of \eqref{07.30.410} comes from the integral over $B_{R^{1+\epsilon}}$.  For $x \in B_{R^{1+\epsilon}}$, by Fourier uncertainty principle, we have
\begin{equation}
    \sum_{\gamma_k }|f_{\gamma_k}(x)|^2 \lesssim R^{O(\epsilon)} 
    g_{k+1}(y) \lesssim R^{O(\epsilon)}\sum_{\gamma}\|f_{\gamma}\|_{\infty}^2.
\end{equation}
Combining these two, we have
\begin{equation}
    \sum_{\theta \in \mathcal{P}_{R^{-1}}}|f_{\theta}(x)|^2 \lesssim R^{O(\epsilon)}R_kR^{-1}\sum_{\gamma}\|f_{\gamma}\|_{\infty}^2
\end{equation}
for any $x \in B_{R^{1+\epsilon}}$. By applying this inequality to \eqref{07.30.410}, we have
\begin{equation}\label{24.01.19.4112}
\begin{split}
& \alpha^6|U_{\alpha,\mathrm{mul}} \cap \Lambda_{k}|
 \\  & \lesssim R^{O(\epsilon)} R_kR^{-1}\sum_{\gamma}\|f_{\gamma}\|_{\infty}^2
    \sum_{B_{R}: B_{R} \cap U_{\alpha,\mathrm{mul}} \cap \Lambda_{k} \neq \emptyset }
    \avprod_{i=1}^3
    \int_{ B_{R^{1+\epsilon}}  } \big( \sum_{\theta \in \mathcal{P}_{R^{-1}} : \theta \subset I_i }|f_{\theta}|^2 \big)^2.
    \end{split}
\end{equation}
For any sequences $A_i,B_i,C_i$, by H\"{o}lder's inequality, we have
\begin{equation}
\sum_i
    (A_iB_iC_i)^{\frac13} \lesssim \big(\sum (A_iB_i)^{\frac12} \big)^{\frac13} 
    \big(\sum (A_iC_i)^{\frac12} \big)^{\frac13} 
    \big(\sum (B_iC_i)^{\frac12} \big)^{\frac13}. 
\end{equation}
Hence, by abusing a notation,  we may say that \eqref{24.01.19.4112} is bounded by
\begin{equation}\label{24.02.13.415}
    \lesssim R^{O(\epsilon)} R_kR^{-1}\sum_{\gamma}\|f_{\gamma}\|_{\infty}^2
    \sum_{B_{R}: B_{R} \cap U_{\alpha,\mathrm{mul}} \cap \Lambda_{k} \neq \emptyset }
    \avprod_{i=1}^2
    \int_{ B_{R^{1+\epsilon}} } \big( \sum_{\theta \in \mathcal{P}_{R^{-1}} : \theta \subset I_i }|f_{\theta}|^2 \big)^2.
\end{equation}
We will show that
\begin{equation}\label{24.02.13.416}
\begin{split}
    \sum_{B_{R}: B_{R} \cap U_{\alpha,\mathrm{mul}} \cap \Lambda_{k} \neq \emptyset }
    \avprod_{i=1}^2
    \int_{ B_{R^{1+\epsilon}} } \big( \sum_{\theta \in \mathcal{P}_{R^{-1}} : \theta \subset I_i }|f_{\theta}|^2 \big)^2  
    \lesssim \int_{\R^3}  \sum_{\gamma_k}|f_{\gamma_k}|^4.
\end{split}
\end{equation}
Assuming this, let us finish the proof of \eqref{24.01.19.42}. By \eqref{24.01.19.4112}, \eqref{24.02.13.415}, \eqref{24.02.13.416}, we have
\begin{equation}
\begin{split}
    \alpha^6|U_{\alpha,\mathrm{mul}} \cap \Lambda_{k}| &\lesssim R^{O(\epsilon)} R_kR^{-1}\sum_{\gamma}\|f_{\gamma}\|_{\infty}^2 \int \sum_{\gamma_k}|f_{\gamma_k}|^4
    \\& \lesssim  R^{O(\epsilon)}R_kR^{-1}R^{\beta}
    \sup_{\gamma}\|f_{\gamma}\|_{\infty}^2 \int \sum_{\gamma_k}|f_{\gamma_k}|^4.
\end{split}
\end{equation}
Note that $\#(\gamma \subset \gamma_k) \lesssim R^{\beta}/R_k$. So
by Lemma \ref{24.02.13.lem42} and H\"{o}lder's inequality, this is further bounded by
\begin{equation}
\begin{split}
    &\lesssim R^{O(\epsilon)}R_kR^{-1}R^{\beta}
    (\frac{R^{\beta}}{R_k})\sup_{\gamma}\|f_{\gamma}\|_{\infty}^2 \int \sum_{\gamma}|f_{\gamma}|^4
    \\& \sim R^{O(\epsilon)}R^{-1}R^{2\beta}\sup_{\gamma}\|f_{\gamma}\|_{\infty}^2 \int \sum_{\gamma}|f_{\gamma}|^4.
\end{split}
\end{equation}
Recall that $p_c=6+2/\beta$ and
\begin{equation}
    p_c\beta(\frac12-\frac{1}{p_c})=1+2\beta.
\end{equation}
Hence, to prove \eqref{24.01.19.42}, what we need to check is
\begin{equation}\label{24.02.13.421}
    \alpha^{2/\beta} \lesssim R^{O(\epsilon)} R^2 \big(\sup_{\gamma}\|f_{\gamma}\|_{\infty} \big)^{\frac{2}{\beta}}.
\end{equation}
By Lemma \ref{24.01.18.lem411},
\begin{equation}
    \alpha \lesssim R^{O(\epsilon)}(R_kR^{\beta})^{\frac12}\sup_{\gamma}\|f_{\gamma}\|_{\infty} \lesssim R^{O(\epsilon)}R^{\beta}\sup_{\gamma}\|f_{\gamma}\|_{\infty},
\end{equation}
and this verifies \eqref{24.02.13.421}.
\\

We have proved \eqref{24.01.19.42} under the assumption of \eqref{24.02.13.416}. It remains to prove \eqref{24.02.13.416}.
To proceed, we pass by the intermediate scale ${R_{k}}$.
We claim that 
\begin{equation}\label{24.01.25.415}
\begin{split}
    \sum_{B_{R} \subset B_{R_{k}}^{(2)} \times B_R^{(1)} }
    \Big(\avprod_{i=1}^2
    \int_{ B_{R^{1+\epsilon}} } \big( \sum_{\theta \in \mathcal{P}_{R^{-1}} : \theta \subset I_i }|f_{\theta}|^2 \big)^2  \Big) \lesssim R^{O(\epsilon)} \int_{B_{R_{k}}^{(2)} \times B_{R^{1+\epsilon}}^{(1)} } |g_{k}|^2.
\end{split}
\end{equation}
Let us assume the claim and continue to prove \eqref{24.02.13.416}. By \eqref{24.01.25.415},
\begin{equation}\label{24.02.14.430}
\begin{split}
    &\sum_{B_{R}: B_{R} \cap U_{\alpha,\mathrm{mul}} \cap \Lambda_{k} \neq \emptyset }
    \avprod_{i=1}^2
     \int_{ B_{R^{1+\epsilon}} } \big( \sum_{\theta \in \mathcal{P}_{R^{-1}} : \theta \subset I_i }|f_{\theta}|^2 \big)^2  
    \\&
    \lesssim \sum_{(B_{R_{k}}^{(2)} \times B_{R^{1+\epsilon} }^{(1)}) \cap \Lambda_k \neq \emptyset } \int_{B_{R_{k}}^{(2)} \times B_{R^{1+\epsilon}}^{(1)}} |g_{k}|^2.
\end{split}
\end{equation}
By Corollary \ref{24.01.15.cor36}, this is bounded by
\begin{equation}
    \int_{\R^3} \sum_{\gamma_{k}}|f_{\gamma_{k}}|^4.
\end{equation}
This completes the proof of \eqref{24.02.13.416}.

Let us give a proof of the claim. By H\"{o}lder's inequality, we have
\begin{equation}\label{24.02.04.417}
    \begin{split}
        \int_{B_{R^{1+\epsilon}}} \big( \sum_{\theta \in \mathcal{P}_{R^{-1}} : \theta \subset I_i }|f_{\theta}|^2 \big)^2
        \lesssim
          R^{O(\epsilon)}\int_{B_{R^{1+\epsilon}}} \big( \sum_{\theta' \in \mathcal{P}_{R^{-1-\epsilon}} : \theta' \subset I_i }|f_{\theta'}|^2 \big)^2.
    \end{split}
\end{equation}
By the definition of $\theta' \in \mP_{R^{-1-\epsilon}}$, we have
\begin{equation*}
\begin{split}
    \theta'=\{(\xi_1,\xi_2,\xi_3) : l R^{-1-\epsilon} \leq &\xi_1 <(l+1)R^{-1-\epsilon}, 
    \\&|\xi_2-\xi_1^2| \leq R^{-2-2\epsilon}, |\xi_3-3\xi_1\xi_2+2\xi_1^3| \leq R^{-1} \}
\end{split}
\end{equation*}
for some $l$. This set is contained in a rectangular box with dimension $R^{-1-\epsilon} \times R^{-2-\epsilon} \times R^{-1}$, and long directions are parallel to the $\xi_3$-axis. Then by an uncertainty principle, on the set $B_{R^{2+2\epsilon}}^{(2)} \times B_{R}^{(1)}$, we can write
\begin{equation}\label{24.11.15.425}
    |f_{\theta'}(x_1,x_2,x_3)|^2=\sum_{T_{\theta'}} |a_{\theta',i}|\chi_{T_{\theta'}}(x_1,x_2,x_3),
\end{equation}
where $T_{\theta'}$ is a rectangular box with dimension $R^{1+\epsilon} \times R^{2+2\epsilon} \times R$ and the shortest direction is parallel to the $x_3$-axis. For simplicity, let us first consider the case that $R_k \leq R^{2+2\epsilon}$ as a toy case. Then
\begin{equation}\label{24.02.04.419}
\begin{split}
    &\sum_{B_{R} \subset B_{R_{k}}^{(2)} \times B_R^{(1)} }
    \Big(\avprod_{i=1}^2
    \int_{ B_{R^{1+\epsilon}} } \big( \sum_{\theta' \in \mathcal{P}_{R^{-1-\epsilon}} : \theta' \subset I_i }|f_{\theta'}|^2 \big)^2  \Big)
    \\&\lesssim R^{O(\epsilon)} R
    \sum_{B_{R^{1+\epsilon}}^{(2)} \subset B_{R_{k}}^{(2)} }
    \int_{B_{R^{1+\epsilon}}^{(2)}}
    \big(\avprod_{i=1}^2 \sum_{\theta', T_{\theta'}}|a_{\theta',i}|\chi_{T_{\theta'}}(x_1,x_2,0) \big)^2 \,dx_1dx_2
    \\&\lesssim  R^{O(\epsilon)} R
    \int_{B_{R_{k}}^{(2)}}\big(
    \avprod_{i=1}^2 \sum_{\theta', T_{\theta'}}|a_{\theta',i}|^2\chi_{T_{\theta'}}(x_1,x_2,0) \big)^2 \,dx_1dx_2.
\end{split}
\end{equation}
For convenience, we write the last inequality by
\begin{equation}\label{07.30.427}
    R^{O(\epsilon)} R(R_{k})^2\fint_{B_{R_{k}}^{(2)}}\big(
    \avprod_{i=1}^2 \sum_{\theta', T_{\theta'}}|a_{\theta',i}|^2\chi_{T_{\theta'}}(x_1,x_2,0) \big)^2dx_1dx_2.
\end{equation}
Note that the set
\begin{equation}
    \{ (x_1,x_2): (x_1,x_2,0) \in T_{\theta'} \} \cap B_{R_k}^{(2)}
\end{equation}
 is a tube with dimension $R_{k} \times R^{1+\epsilon}$. We do isotropic rescaling so that the tube becomes a tube with dimension $R_{k}R^{-1-\epsilon} \times 1$. Then apply Lemma \ref{24.02.04.prop38}, and rescale back. After this process, \eqref{07.30.427} is bounded by
\begin{equation}
 R^{O(\epsilon)} R
    (R_{k})^2
\big(\avprod_{i=1}^2\fint_{B_{R_{k}}^{(2)}} \sum_{\theta' \subset I_i , T_{\theta'}}|a_{\theta',i}|\chi_{T_{\theta'}}\big)^2.
\end{equation}
This is further bounded by
\begin{equation}\label{24.02.04.420}
\begin{split}
&\lesssim   R^{O(\epsilon)} R(R_{k})^2\big(\avprod_{i=1}^2\fint_{B_{R_{k}}^{(2)}} \sum_{\theta' \subset I_i }|f_{\theta'}|^2\big)^2 \\&
\lesssim   R^{O(\epsilon)} R(R_{k})^2 \big( \avprod_{i=1}^2 \int_{\R^2} \sum_{\gamma_{k} \in \mP_{R_{k}^{-1}}: \gamma_k \subset I_i }|f_{\gamma_{k}}|^2 w^{\#}_{B_{R_{k}}} \big)^2  \lesssim   R^{O(\epsilon)} R (R_{k})^2 \big(g_{k}(x_0)\big)^2,
\end{split}
\end{equation}
where $x_0$ is any point in $B_{R_{k}}^{(2)} \times B_{R^{1+\epsilon} }^{(1)}$. So this is bounded by
\begin{equation}
    \int_{B_{R_{k}}^{(2)} 
 \times B_{R^{1+\epsilon}} }|g_{k}(x_1,x_2,x_3)|^2 \, dx_1 dx_2 dx_3.
\end{equation}
Combining all the inequalities gives the claim \eqref{24.01.25.415}. We now suppose that $R_k >R^{2+2\epsilon}$. Then on \eqref{24.02.04.419}, we write
\begin{equation}
    \sum_{B_R \subset B_{R_k}^{(2)} \times B_R^{(1)} } =\sum_{B_{R^{2+2\epsilon}}^{(2)} \times B_R^{(1)} \subset B_{R_k}^{(2)} \times B_R^{(1)} }\sum_{B_R \subset B_{R^{2+2\epsilon}}^{(2)} \times B_R^{(1)} }.
 \end{equation}
 By repeating the previous argument, \eqref{24.02.04.419} is bounded by
 \begin{equation}
     R^{O(\epsilon)} R(R^{2+2\epsilon} )^2\sum_{B_{R^{2+2\epsilon}}^{(2)}  \subset B_{R_k}^{(2)}  }
      \big( \avprod_{i=1}^2 \int_{\R^2} \sum_{\gamma_{k_0} \in \mP_{R^{-2-2\epsilon}}: \gamma_{k_0} \subset I_i }|f_{\gamma_{k_0}}|^2 w^{\#}_{B_{R^{2+2\epsilon}}} \big)^2
 \end{equation}
 where $R_{k_0}=R^{2+2\epsilon}$. We repeat \eqref{24.11.15.425} on the set $B_{R^{4+4\epsilon}}^{(2)} \times B_R^{(1)}$, and follow the previous argument. Repeat the whole argument until we reach the scale $R_k$. Then we get the desired bound. This bootstrapping argument appears in \cite{MR2275834}, so we leave out the details.
\medskip

\subsection{Estimate for  \texorpdfstring{$|U_{\alpha,\mathrm{mul}} \cap \Lambda_{k}|$}{} for \texorpdfstring{$1 \leq k \leq \epsilon^{-1}$}{}}

We start with

\begin{equation}\label{24.01.19.47}
    \alpha^6|U_{\alpha,\mathrm{mul}} \cap \Lambda_{k}| \leq
    \sum_{B_{R_{k-1}}: B_{R_{k-1}} \cap U_{\alpha,\mathrm{mul}} \cap \Lambda_k \neq \emptyset }
    \int_{B_{R_{k-1}}} \Big|\avprod_{i=1}^3 f_i \Big|^6.
\end{equation}
We apply Proposition \ref{24.01.19.prop37}, and this gives
\begin{equation}
\lesssim 
    \sum_{B_{R_{k-1}}: B_{R_{k-1}} \cap U_{\alpha,\mathrm{mul}} \cap \Lambda_k \neq \emptyset }
    \int_{\mathbb{R}^3} \big( \sum_{\gamma_{k-1}}|f_{\gamma_{k-1}}(x)|^2 \big)^3 w_{B_{R_{k-1}}}(x)\,dx.
\end{equation}
By H\"{o}lder's inequality, 
\begin{equation}
\lesssim R^{O(\epsilon)}
    \sum_{B_{R_{k-1}}: B_{R_{k-1}} \cap U_{\alpha,\mathrm{mul}} \cap \Lambda_k \neq \emptyset }
    \int_{\mathbb{R}^3} \big( \sum_{\gamma_{k}}|f_{\gamma_{k}}(x)|^2 \big)^3 w_{B_{R_{k-1}}}(x)\,dx.
\end{equation}
The weight function $w_{B_{R_{k-1}}}$ decays fast outside of $B_{R_{k}}$. So we have
\begin{equation}\label{24.01.18.47}
\begin{split}
\lesssim R^{O(\epsilon)}
    \sum_{B_{R_{k-1}}: B_{R_{k-1}} \cap U_{\alpha,\mathrm{mul}} \cap \Lambda_k \neq \emptyset }
    &\int_{B_{R_{k}} } \big( \sum_{\gamma_{k}}|f_{\gamma_{k}}(x)|^2 \big)^3\,dx \\&+\mathrm{Rapdec}(R)(\sum_{\gamma}\|f_{\gamma}\|_{\infty}^2)^3.
\end{split}
\end{equation}
By doing some trivial estimate,
the first term of \eqref{24.01.18.47} is further bounded by
\begin{equation}\label{24.01.15.45}
\begin{split}
\lesssim R^{O(\epsilon)} 
    \sum_{B_{R_{k-1}}: B_{R_{k-1}} \cap U_{\alpha,\mathrm{mul}} \cap \Lambda_k \neq \emptyset } &\Big\| \sum_{\gamma_{k+1}}|f_{\gamma_{k+1}}|^2 \Big\|_{L^{\infty}(B_{R_{k+1}})}
    \\& \times
    \int_{B_{R_{k}} } \big( \sum_{\gamma_{k}}|f_{\gamma_{k}}(x)|^2 \big)^2\,dx.
\end{split}    
\end{equation}
Suppose that $B_{R_{k-1}} \cap U_{\alpha,\mathrm{mul}} \cap \Lambda_{k} \neq \emptyset$. Let $x_0$ be an element of the set. Then by Lemma \ref{24.01.15.lem33} and the definition of $\Lambda_{k}$, we have
\begin{equation}
    \Big\| \sum_{\gamma_{k+1}}|f_{\gamma_{k+1}}|^2 \Big\|_{L^{\infty}(B_{R_{k+1}})} \lesssim \Big\| \sum_{\gamma_{k+1}}|f_{\gamma_{k+1}}|^2 \Big\|_{L^{1}(w_{B_{R_{k+1}}}^{\#}(x_0))} \lesssim R^{O(\epsilon)}\sum_{\gamma \in \mPb}\|f_{\gamma}\|_{\infty}^2.
\end{equation}
By this inequality, \eqref{24.01.15.45} is bounded by
\begin{equation}\label{24.02.15.439}
\begin{split}
\lesssim R^{O(\epsilon)} \sum_{\gamma \in \mPb}\|f_{\gamma}\|_{\infty}^2
   \sum_{B_{R_{k-1}}: B_{R_{k-1}} \cap U_{\alpha,\mathrm{mul}} \cap \Lambda_k \neq \emptyset } 
    \int_{B_{R_{k}} } \big( \sum_{\gamma_{k}}|f_{\gamma_{k}}(x)|^2 \big)^2\,dx.
\end{split}    
\end{equation} 
By Lemma \ref{24.01.15.lem33},
\begin{equation}
\begin{split}
    \int_{B_{R_{k}} } \big( \sum_{\gamma_{k}}|f_{\gamma_{k}}(x)|^2 \big)^2\,dx & \lesssim
|B_{R_{k}}| \Big\|\sum_{\gamma_{k}}|f_{\gamma_{k}}|^2 \Big\|_{L^\infty(B_{R_{k}})}^2 \\& \lesssim |B_{R_{k}}|
\Big\| \sum_{\gamma_{k}}|f_{\gamma_{k}}|^2 \Big\|_{L^{1}(w_{B_{R_{k}}}^{\#}(x_0))}^2
\\& \lesssim \int_{B_{R_{k}}} |g_{k}(x)|^2\,dx,
\end{split}
\end{equation}
where $x_0$ is an element in the ball $B_{R_{k}}$. Hence, \eqref{24.02.15.439} is bounded by
\begin{equation}
    \lesssim R^{O(\epsilon)} \sum_{\gamma \in \mPb}\|f_{\gamma}\|_{\infty}^2
    \sum_{B_{R_{k}}: B_{R_{k}} \cap  \Lambda_{k} \neq \emptyset } 
    \int_{B_{R_{k}} } |g_{k}|^2.
\end{equation}
We now apply Corollary \ref{24.01.15.cor36}, and obtain
\begin{equation}
    \lesssim R^{O(\epsilon)} \sum_{\gamma \in \mPb}\|f_{\gamma}\|_{\infty}^2\int_{\R^3}\sum_{\gamma_{k}}|f_{\gamma_{k}}|^4.
\end{equation}
By Lemma \ref{24.02.13.lem42}, and Cauchy-Schwarz inequality, this is bounded by
\begin{equation}
\begin{split}
&\lesssim R^{O(\epsilon)} \sum_{\gamma \in \mPb}\|f_{\gamma}\|_{\infty}^2\int_{\R^3}\sum_{\gamma_{k}}\Big|\sum_{\gamma \subset \gamma_{k}}|f_{\gamma}|^2\Big|^2
    \\&\lesssim R^{O(\epsilon)}R^{\beta}R_{k}^{-1}\sum_{\gamma \in \mPb}\|f_{\gamma}\|_{\infty}^2\sum_{\gamma} \int |f_{\gamma}|^4.\end{split}    
\end{equation}
Here we used that the number of $\gamma$ in $\gamma_{k}$ is bounded by $O(R^{\beta}R_{k}^{-1})$.

To summarize, we have obtained
\begin{equation}
    \begin{split}\alpha^6|U_{\alpha,\mathrm{mul}} \cap \Lambda_{k}| &\lesssim R^{O(\epsilon)}R^{\beta}R_k^{-1}\sum_{\gamma \in \mPb}\|f_{\gamma}\|_{\infty}^2\sum_{\gamma \in \mPb } \int |f_{\gamma}|^4
    \\& \lesssim 
    R^{O(\epsilon)}R^{2\beta}R_k^{-1}\sup_{\gamma \in \mPb}\|f_{\gamma}\|_{\infty}^2\sum_{\gamma \in \mPb } \int |f_{\gamma}|^4
    \end{split}
\end{equation}
By Lemma \ref{24.01.18.lem411},
\begin{equation}
        \alpha \lesssim R^{O(\epsilon)}(R_{k})^{\frac12}(\sum_{\gamma}\|f_{\gamma}\|_{\infty}^2)^{\frac12} \lesssim R^{O(\epsilon)}(R_k)^{\frac12}R^{\frac{\beta}{2}}\sup_{\gamma}\|f_{\gamma}\|_{\infty}.
    \end{equation}
Combining these two, we have
\begin{equation*}
    \alpha^{6+2/\beta}|U_{\alpha,\mathrm{mul}} \cap \Lambda_{k}| \lesssim 
    (R_k)^{\frac{1}{\beta}}R
    R^{O(\epsilon)}R^{2\beta}R_k^{-1}
    \sup_{\gamma}\|f_{\gamma}\|_{\infty}^{2+\frac{2}{\beta}}
    \sum_{\gamma} \int |f_{\gamma}|^4.
\end{equation*}
To prove \eqref{24.01.19.42}, it suffices to check
\begin{equation}
    (R_k)^{\frac1\beta-1}R^{2\beta+1} \lesssim  R^{(3\beta+1)-\beta}.
\end{equation}
This is true by the assumption that $\beta \geq 1$.
\medskip

\subsection{Estimate for  \texorpdfstring{$|U_{\alpha,\mathrm{mul}} \cap \Lambda_{k}|$}{} for \texorpdfstring{$k=0$}{}}

By Lemma \ref{24.01.18.lem411},
\begin{equation}\label{24.01.19.432}
    \alpha \lesssim R^{O(\epsilon)} (\sum_{\gamma}\|f_{\gamma}\|_{\infty}^2)^{\frac12} \lesssim  R^{O(\epsilon)}R^{\beta/2}\sup_{\gamma}\|f_{\gamma}\|_{\infty}.
\end{equation}
On the other hand, Lemma \ref{24.02.13.lem42}, and H\"{o}lder's inequality,
\begin{equation}\label{24.01.19.433}
\begin{split}
    \alpha^4|U_{\alpha,\mathrm{mul}} \cap \Lambda_{0}| &\lesssim \int |f|^4
    \\&\lesssim \int \Big| \sum_{\gamma}|f_{\gamma}|^2 \Big|^2 \lesssim R^{\beta} \sum_{\gamma} \int |f_{\gamma}|^4.
\end{split}
\end{equation}
Combining
\eqref{24.01.19.432} and \eqref{24.01.19.433} gives
\begin{equation}
    \alpha^{6+\frac{2}{\beta}}|U_{\alpha,\mathrm{mul}} \cap \Lambda_{0}| \lesssim R^{O(\epsilon)} R^{\beta} R^{\frac{\beta}{2}(2+\frac{2}{\beta})}\big(\sup_{\gamma}\|f_{\gamma}\|_{\infty}\big)^{2+\frac{2}{\beta}} \sum_{\gamma}\int |f_{\gamma}|^4.
\end{equation}
This gives \eqref{24.01.19.42}.

\section{Proof of Proposition \ref{parabolathm} \label{parapf}}

Proposition \ref{parabolathm} follows from the small cap decoupling theorem (Theorem 3.1) of \cite{MR4153908}, which we describe using our notation presently. 

\begin{theorem}[Theorem 3.1 of \cite{MR4153908}] Let $\beta\in[\frac{1}{2},1]$ and consider $R^{-\beta}\times R^{-1}$ small caps $\gamma$. Let $2\le p$. Then 
\[   \|f\|_{L^p(B_R)}^p\lesssim_\e R^\e [R^{p\beta-2\beta-1}+R^{(\frac{p}{2}-1)\beta}] (\sum_\gamma\|f_\gamma\|_{L^p(\R^2)}^p)^{p/2} \]
for any Schwartz function $f:\R^2\to\mathbb{C}$ with Fourier transform supported in the $R^{-1}$-neighborhood of $\{(t,t^2):0\le t\le 1\}$. 
\end{theorem}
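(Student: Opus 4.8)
The plan is to prove the inequality at a few exponents and fill in the rest by interpolation. In $p$-th root form the assertion reads $\|f\|_{L^p(B_R)}\lesssim_\e R^\e\big(R^{\beta(1-2/p)-1/p}+R^{\beta(1/2-1/p)}\big)\big(\sum_\gamma\|f_\gamma\|_{L^p(\R^2)}^p\big)^{1/p}$, and the two terms cross exactly at the critical exponent $p_c:=2+2/\beta\in[4,6]$, the first dominating for $p\ge p_c$ and the second for $2\le p\le p_c$. I would establish the sharp estimate at the three nodes $p=2$, $p=p_c$, $p=\infty$: at $p=2$ it is Plancherel together with the bounded overlap of the caps $\gamma$, with constant $O(1)=R^{\beta(1/2-1/2)}$; at $p=\infty$ it is the triangle inequality $\|f\|_\infty\le\#\{\gamma\}\sup_\gamma\|f_\gamma\|_\infty$, with constant $R^\beta=R^{\beta(1-0)-0}$; and $p=p_c$ is the crux, treated below. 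Interpolating the $p=2$ and $p=p_c$ estimates (interpolation of $\ell^p$-decoupling inequalities along the $\ell^p$ scale) reproduces the constant $R^{\beta(1/2-1/p)}$ for $2\le p\le p_c$, interpolating $p=p_c$ with $p=\infty$ reproduces $R^{\beta(1-2/p)-1/p}$ for $p\ge p_c$, and one then takes the maximum of the two terms and raises to the $p$-th power.

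For the critical estimate, since $R^{\beta(1/2-1/p_c)}=\#\{\gamma\}^{1/2-1/p_c}$, Hölder reduces it to the $\ell^2$ form $\|f\|_{L^{p_c}(B_R)}\lesssim_\e R^\e\big(\sum_\gamma\|f_\gamma\|_{L^{p_c}(\R^2)}^2\big)^{1/2}$. When $\beta=\tfrac12$ (so $p_c=6$) this is precisely the Bourgain--Demeter $\ell^2L^6$ decoupling theorem for the parabola into canonical $R^{-1/2}$-arcs; when $\beta=1$ (so $p_c=4$) it is the classical $L^4$ reverse square function estimate: expanding $\|f\|_{L^4(w_{B_R})}^4$, discarding the products whose Fourier supports are $R^{-1}$-separated, applying a fourfold Hölder, and using that the $R^{-\beta}$-caps on the parabola carry almost minimal additive energy ($E_4\lesssim_\e R^\e\#\{\gamma\}^2$, equivalently $\#\{(\gamma_2,\gamma_3,\gamma_4):\gamma_2+\gamma_3-\gamma_4\approx\gamma_1\}\lesssim_\e R^\e\#\{\gamma\}$) yields $\|f\|_{L^4}\lesssim_\e R^\e\#\{\gamma\}^{1/4}\big(\sum_\gamma\|f_\gamma\|_{L^4}^4\big)^{1/4}$. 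For the remaining $\beta\in(\tfrac12,1)$ I would run, at $p=p_c$, the two-dimensional analogue of the argument of the earlier sections of this paper: a broad--narrow decomposition reduces $\|f\|_{L^{p_c}(B_R)}$ to a bilinear term over $K^{-1}$-transverse frequency intervals plus a narrow sum over $K^{-1}$-caps, the narrow sum being absorbed by parabolic rescaling and induction on $R$ exactly as in Theorem~\ref{24.01.20.thm22}; the bilinear term is then estimated via the bilinear $L^4$ restriction estimate for the parabola (Plancherel plus the transversality Jacobian of $(t_1,t_2)\mapsto(t_1+t_2,t_1^2+t_2^2)$) together with a high/low decomposition of the square function $\sum_\gamma|f_\gamma|^2$, in which the high part is controlled by the $L^2$-orthogonality of the essentially disjoint Fourier supports of the high parts of $|f_\gamma|^2$ and by $L^4$-orthogonality for the parabola (the $\R^2$ counterparts of Lemma~\ref{24.01.15.lem35}, Corollary~\ref{24.01.15.cor36} and Lemma~\ref{24.02.13.lem42}), while the low part is estimated by Bernstein and recursed to a coarser scale. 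The bilinear Kakeya input Lemma~\ref{24.02.04.prop38} needed in $\R^3$ degenerates to a triviality in $\R^2$, which is one reason the one-dimensional statement is the softer one.

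The principal obstacle is the critical-exponent estimate for $\beta\in(\tfrac12,1)$, i.e.\ the $\R^2$ analogue of \S4: one must carry out the stratification of $B_R$ into the high-sets $\Lambda_k$ and sum the strata so that the exponent collapses to exactly $\beta(1/2-1/p_c)=\beta/(2(\beta+1))$. It is worth emphasising why this cannot be done cheaply: the naive scheme ``Bourgain--Demeter decoupling into canonical $R^{-1/2}$-arcs, then flat decoupling into the $R^{-\beta}$-caps, then Hölder'' produces the constant $R^{(4\beta-1)/(4(\beta+1))}$ at $p_c$, which is strictly larger than the sharp $R^{2\beta/(4(\beta+1))}$ whenever $\beta>\tfrac12$ (the two coincide only at $\beta=\tfrac12$, i.e.\ $p_c=6$); below the canonical scale the relevant slivers are genuinely flat, so flat decoupling is not improved by curvature alone, and one has to exploit transversality and orthogonality in the manner of \S4. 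A secondary, purely bookkeeping matter is to check that the two consecutive-node interpolations reproduce the two displayed exponents with no loss, which is immediate once one notes that the examples of \S\ref{sharpness} pin the target constant down at each of $p=2,p_c,\infty$.
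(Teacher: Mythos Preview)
The paper does not supply a proof of this statement: it is quoted as Theorem~3.1 of \cite{MR4153908} and used as a black box in \S\ref{parapf}. So there is no ``paper's own proof'' to compare against here.

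Your sketch is nonetheless sound. The reduction to the three nodes $p=2$, $p_c=2+2/\beta$, $p=\infty$ via vector-valued (Riesz--Thorin) interpolation of the linear map $\{f_\gamma\}\mapsto\sum_\gamma f_\gamma$ between $\ell^q(L^q)$ spaces is legitimate, and your exponent bookkeeping at each node and under interpolation is correct. At $p_c$ you propose to run the two-dimensional analogue of the argument this paper carries out in $\R^3$ (broad/narrow reduction, bilinear restriction, and a high/low stratification of the square function); that is a valid route and is in fact closer in spirit to the Guth--Maldague high/low method \cite{guth2022small} than to the original argument of \cite{MR4153908}, which proceeds via refined (small-cap) Bourgain--Demeter decoupling, but both approaches yield the sharp constant at $p_c$. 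Your observation that Bourgain--Demeter followed by flat decoupling loses for $\beta>\tfrac12$ is also correct and is precisely the reason a genuine small-cap argument is needed.
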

Theorem 3.1 of \cite{MR4153908} applies to more general convex curves, as does Proposition \ref{parabolathm}, but we focus on the parabola to simplify notation. 

\begin{proof}[Proof of Proposition \ref{parabolathm}]

In Proposition \ref{parabolathm}, perform the change of variables $x\mapsto(N^{-1}x_1,N^{-2}x_2)$, so that we need to prove that
\begin{align}\label{needtoshow}
\int_{[0,N]\times[0,N^{2-\sigma}]} |\sum_{k=1}^Na_ke(x&\cdot(\frac{k}{N},\frac{k^2}{N^2}))|^p dx \\
&\lesssim_\e N^\e (N^{\frac{\sigma}{2}(p-4)+3}+N^{p-1})\sum_{k=1}^N|a_k|^p. \nonumber
\end{align}  
Note that $N^{2-\sigma}<N$. Let $\psi:\R^2\to[0,1]$ be a smooth Schwartz function satisfying $\psi\sim 1$ on $[0,N]\times[0,N^{2-\sigma}]$ and having Fourier transform supported in $[0,N^{-1}]\times[0,N^{\sigma-2}]$. Let $\{\tau\}$ be a collection of $\sim N^{\sigma-2}$ intervals which partition $[0,1]$ and let $f_\tau(x)=\psi(x)\sum_{\frac{k}{N}\in\tau}a_k e(x\cdot(\frac{k}{N},\frac{k^2}{N^2}))$. We will show \eqref{needtoshow} in two cases: $2\le p\le 4$ and then $p\ge 4$. For $2\le p\le 4$, the small cap decoupling theorem for the parabola (Theorem 3.1 of \cite{MR4153908}) says that the left hand side of \eqref{needtoshow} is bounded by a $C_\e N^\e$ factor of
\[ (N^{2-\sigma})^{(\frac{p}{2}-1)}\sum_\tau\|f_\tau\|_{L^p(\R^2)}^p . \]
Analyze each $\|f_\tau\|_{L^p(\R^2)}^p$:
\begin{align*}
\|f_\tau\|_{L^p(\R^2)}^p&=\int_{\R^2}|\psi(x)\sum_{\frac{k}{N}\in\tau}a_k e(x\cdot(\frac{k}{N},\frac{k^2}{N^2}))|^pdx \\
    &\le (\sum_{\frac{k}{N}\in\tau}|a_k|)^{p-2}\int_{\R^2}|\psi(x)\sum_{\frac{k}{N}\in\tau}a_k e(x\cdot(\frac{k}{N},\frac{k^2}{N^2}))|^2dx.   
\end{align*}
The Fourier supports of the summands are disjoint, so the final line above is bounded using $L^2$ orthogonality by 
\[ (\sum_{\frac{k}{N}\in\tau}|a_k|)^{p-2}\int_{\R^2}\sum_{\frac{k}{N}\in\tau}|a_k|^2|\psi(x)|^2 dx\sim (\sum_{\frac{k}{N}\in\tau}|a_k|)^{p-2}(\sum_{\frac{k}{N}\in\tau}|a_k|^2)N^{3-\sigma}. \]
By H\"{o}lder's inequality, using that the number of $\frac{k}{N}\in\tau$ is $\lesssim N^{\sigma-1}$, the right hand side above is bounded by $(N^{\sigma-1})^{(p-2)}\sum_{\frac{k}{N}\in\tau}|a_k|^p N^{3-\sigma}$. Combining this bound for each $\|f_\tau\|_{L^p(\R^2)}^p$ with the result of small cap decoupling gives
\[ \text{L.H.S. of \eqref{needtoshow}}\lesssim_\e N^\e N^{\frac{\sigma}{2}p-2\sigma+3}\sum_k|a_k|^p,\]
as desired. It remains to check the $p\ge 4$ case. Here, we rely on the $p=4$ case that we just justified:
\begin{align*}
    \text{L.H.S. of \eqref{needtoshow}}&\le (\sum_k|a_k|)^{p-4} \int_{\R^2}|\psi(x)\sum_{k=1}^Na_ke(x\cdot(\frac{k}{N},\frac{k^2}{N^2}))|^4dx \\
    &\lesssim_\e N^\e (\sum_k|a_k|)^{p-4}.
\end{align*}
This completes the proof.
\end{proof}

\bibliographystyle{alpha}
\bibliography{reference}

\end{document}